\newtheorem{thm}{Theorem}[section]
\newtheorem{lem}[thm]{Lemma}
\newtheorem{prop}[thm]{Proposition}
\theoremstyle{definition}
\newtheorem{de}[thm]{Definition}
\newtheorem{rem}[thm]{Remark}
\newtheorem*{rem*}{Remark}
\newtheorem{ques}{Question}
\numberwithin{equation}{section}
\begin{document}
\title{On multi-transitivity with respect to a vector}

\author[Z.~Chen]{Zhijing Chen}
\address[Z.~Chen]{Department of Mathematics, Zhongshan University, Guangzhou 510275, P. R. China}
\email{chzhjing@mail2.sysu.edu.cn}
\author[J.~Li]{Jian Li}
\thanks{Corresponding author: Jian Li (lijian09@mail.ustc.edu.cn)}
\address[J.~Li]{Department of Mathematics, Shantou University, Shantou, Guangdong 515063, P.R. China}
\email{lijian09@mail.ustc.edu.cn}
\author[J.~L\"u]{Jie L\"U}
\address[J.~L\"u]{School of Mathematics, South China Normal University, Guangzhou 510631, P. R. China}
\email{ljie@scnu.edu.cn}

\thanks{The first and third author were supported by National Nature Science Funds of China (Grant  no.~11071084).
The second author was supported in part by STU Scientific Research Foundation for Talents (NTF12021) and
the National Natural Science Foundation of China (Grants no.\@11071231, 11171320).}
\subjclass[2000]{Primary: 54H20, 37B40, 58K15, 37B45.}
\keywords{Multi-transitivity, weak mixing, Furstenberg family, Li-Yorke chaos}
\date{\today}

\begin{abstract}
A topological dynamical system $(X,f)$ is said to be multi-transitive
if for every $n\in\mathbb{N}$ the system $(X^{n}, f\times f^{2}\times \dotsb\times f^{n})$ is transitive.
We introduce the concept of multi-transitivity with respect to a vector and show that
multi-transitivity can be characterized by the hitting time sets of  open sets,
answering a question proposed by  Kwietniak and Oprocha [On weak mixing, minimality and weak  disjointness of
all iterates,  Erg. Th. Dynam. Syst., 32 (2012), 1661--1672].
We also show that multi-transitive systems are  Li-Yorke chaotic.
\end{abstract}
\maketitle

%\tableofcontents

\section{Introduction}
A topological dynamical system $(X,f)$ is called \emph{transitive}
if for every two non-empty open subsets $U$ and $V$ of $X$
there exists a positive integer $k$ such that $U\cap f^{-k}(V)$ is not empty.
In his seminal paper~\cite{Furstenberg-1967}, Furstenberg showed that $2$-fold transitivity
(also known as weak mixing) implies $n$-fold transitivity for every positive integer $n$, that is
\begin{thm}
Let $(X,f)$ be a topological dynamical system. If $(X\times X,f\times f)$ is transitive, then
for every $n\in\mathbb{N}$,
the product system $(X\times X\times\dotsb \times X, f\times f\times\dotsb \times f)$ ($n$-times) is also transitive.
\end{thm}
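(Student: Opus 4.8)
The plan is to recast transitivity in terms of \emph{hitting time sets}: for non-empty open $U,V\subseteq X$ set $N(U,V)=\{k\in\mathbb{N}:f^{k}(U)\cap V\neq\emptyset\}$. Then $f$ is transitive precisely when $N(U,V)\neq\emptyset$ for all non-empty open $U,V$, and since every non-empty open subset of a finite power of $X$ contains a product of non-empty open sets, transitivity of $(X^{n},f\times f\times\dotsb\times f)$ is equivalent to $\bigcap_{i=1}^{n}N(U_{i},V_{i})\neq\emptyset$ for all non-empty open $U_{1},\dotsc,U_{n},V_{1},\dotsc,V_{n}\subseteq X$. In the same way, the hypothesis that $f\times f$ is transitive says that $N(U_{1},V_{1})\cap N(U_{2},V_{2})\neq\emptyset$ for any four non-empty open sets; applying this to the boxes $U_{1}\times V_{1}$ and $U_{2}\times V_{2}$ in $X^{2}$, it is equivalent to: for any such sets there is a single $a\in\mathbb{N}$ with $f^{a}(U_{1})\cap U_{2}\neq\emptyset$ \emph{and} $f^{a}(V_{1})\cap V_{2}\neq\emptyset$.

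The theorem will then follow from one lemma: if $f\times f$ is transitive, then for all non-empty open $U_{1},V_{1},U_{2},V_{2}$ there exist non-empty open $U_{0},V_{0}$ with $N(U_{0},V_{0})\subseteq N(U_{1},V_{1})\cap N(U_{2},V_{2})$. Granting this, I would fix non-empty open $U_{1},\dotsc,U_{n},V_{1},\dotsc,V_{n}$ and apply the lemma $n-1$ times: merge $(U_{1},V_{1})$ and $(U_{2},V_{2})$ into a pair whose hitting set sits inside $N(U_{1},V_{1})\cap N(U_{2},V_{2})$, merge the result with $(U_{3},V_{3})$, and so on, ending with a single pair $(\widetilde{U},\widetilde{V})$ of non-empty open sets such that $N(\widetilde{U},\widetilde{V})\subseteq\bigcap_{i=1}^{n}N(U_{i},V_{i})$. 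Since $f$ itself is transitive (a special case of the hypothesis), $N(\widetilde{U},\widetilde{V})\neq\emptyset$, so the intersection is non-empty, which is exactly what is needed; no induction beyond iterating the lemma is required.

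To prove the lemma I would invoke the reformulated hypothesis to choose one $a\in\mathbb{N}$ with $f^{a}(U_{1})\cap U_{2}\neq\emptyset$ and $f^{a}(V_{1})\cap V_{2}\neq\emptyset$, and then set $U_{0}:=U_{1}\cap f^{-a}(U_{2})$ and $V_{0}:=V_{1}\cap f^{-a}(V_{2})$, both non-empty and open. If $k\in N(U_{0},V_{0})$, pick $x\in U_{0}$ with $f^{k}(x)\in V_{0}$; then $x\in U_{1}$ and $f^{k}(x)\in V_{1}$ give $k\in N(U_{1},V_{1})$, while $f^{a}(x)\in U_{2}$ together with $f^{k}\bigl(f^{a}(x)\bigr)=f^{a}\bigl(f^{k}(x)\bigr)\in V_{2}$ gives $k\in N(U_{2},V_{2})$. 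The one point that really uses weak mixing rather than mere transitivity — and the step I expect to be the crux — is forcing the \emph{same} integer $a$ to govern the source pair $(U_{1},U_{2})$ and the target pair $(V_{1},V_{2})$ simultaneously; this is what makes a single orbit segment in $U_{0}$ witness both hits $f^{k}(U_{1})\cap V_{1}$ and $f^{k}(U_{2})\cap V_{2}$ at one common time $k$. Everything else is routine.
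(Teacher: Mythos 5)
Your argument is correct: it is the classical Furstenberg intersection trick, where weak mixing applied to the boxes $U_{1}\times V_{1}$ and $U_{2}\times V_{2}$ yields a single time $a$, and the sets $U_{0}=U_{1}\cap f^{-a}(U_{2})$, $V_{0}=V_{1}\cap f^{-a}(V_{2})$ satisfy $N(U_{0},V_{0})\subseteq N(U_{1},V_{1})\cap N(U_{2},V_{2})$, after which iterating the lemma and using ordinary transitivity of $f$ finishes the proof. The paper itself gives no proof of this statement --- it is quoted from Furstenberg's 1967 paper --- and your proof coincides with the standard one in the literature, so there is nothing further to compare.
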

In 2010, Moothathu~\cite{Moothathu-2010} introduced the notion of multi-transitivity.
A dynamical system $(X,f)$ is called \emph{multi-transitive} if for every $n\in\mathbb{N}$,
the product system $(X^n,f\times f^2\times\dotsb\times f^n)$ is transitive.
He showed that for a minimal system multi-transitivity is equivalent to weak mixing, and
he also asked whether there are implications between multi-transitivity and weak mixing
for general (not necessarily minimal) systems.

Recently, Kwietniak and Oprocha~\cite{D-Kwietniak-P-Oprocha-2010} showed that in general
there is no connection between multi-transitivity and weak mixing
by constructing examples of weakly mixing but non-multi-transitive
and multi-transitive but non-weakly mixing systems. They proposed the following problem.

\begin{ques}
Is there any non-trivial characterization of multi-transitive weakly mixing systems?
\end{ques}

In this paper, we will give a positive answer to this question  and
show that multi-transitivity can be characterized by the hitting time sets of open sets.
To this end, we first generalize the concept of multi-transitivity
by introducing multi-transitivity with respect to a vector.
More specifically, for a vector $\mathbf{a}=(a_1,a_2,\dotsc,a_r)$ of positive integers,
a dynamical system $(X,f)$ is called \emph{multi-transitive with respect to the vector $\mathbf{a}$}
if the system  $(X^r,f^{a_1}\times f^{a_2}\times \dotsb \times f^{a_r})$  is transitive.
In Section 3 we show some basic properties of multi-transitivity with respect to a vector.
In Section 4 we define a new kind of Furstenberg family generated by a vector, and
show that the hitting time sets of open sets of a multi-transitive system with respect to a vector can be
characterized by this kind of Furstenberg family.
In Section 5, we study multi-transitivity with respect to a sequence, and show that
multi-transitive systems with respect to a difference set are strongly scattering.
Finally, we show that multi-transitive systems are  Li-Yorke chaotic.

\section{Preliminaries}
In this paper, the sets of all non-negative integers and positive natural numbers
are denoted by $\mathbb{Z}_+$ and $\mathbb{N}$ respectively.
For $r\in\mathbb{N}$, denote $\mathbb{N}^r=\mathbb{N}\times \mathbb{N}\times \dotsb\times \mathbb{N}$ ($r$-copies).
\subsection{Topological dynamics}
A \emph{topological dynamical system} is a pair $(X, f)$,
where $X$ is a compact metric space with a metric $d$ and $f:X\rightarrow X$ is a continuous map.
Let $f^{0}=id$ (the identity map) and for $n\in\mathbb{N}$,  let $f^n=f^{n-1}\circ f$.
For $n\in\mathbb{N}$, we can also define the $n$-th product $(X^n,f^{(n)})$ of $(X,f)$,
where $X^n=X\times X\times \dotsb\times X$ ($n$-times) and $f^{(n)}=f\times f\times \dotsb\times f$ ($n$-times).

Let $(X, f)$ be a dynamical system.
For two subsets $U$, $V$ of $X$, we define the \emph{hitting time set of $U$ and $V$} by
\[N(U,V)=\{n\in\mathbb{N}:\ f^n(U)\cap V\neq\emptyset\}=\{n\in\mathbb{N}:\ U\cap f^{-n}(V)\neq\emptyset\}.\]
We say that $(X,f)$ is \emph{(topologically) transitive} if for every
two non-empty open subsets $U$ and $V$ of $X$, the hitting time set $N(U,V)$ is non-empty;
\emph{totally transitive} if $(X,f^n)$ is transitive for any $n\in\mathbb{N}$;
\emph{weakly mixing} if the product system $(X\times X,f\times f)$ is transitive;
\emph{strongly mixing} if for every
two non-empty open subsets $U$ and $V$ of $X$, the set $N(U,V)$ is cofinite,
that is, there exists $N\in\mathbb{N}$ such that $\{N,N+1,\dotsc\}\subset N(U,V)$.

For $x\in X$, denote the orbit of $x$ by  $Orb_f(x)=\{x,f(x),f^2(x),\dotsc,f^n(x),\dotsc\}$.
A point $x\in X$ is called a \emph{transitive point}
if the closure of the orbit of $x$ equals $X$, i.e., $\overline{Orb_f(x)}=X$.
We denote the set of all transitive points of $(X,f)$ by $Trans(X, f)$.
A dynamical system $(X,f)$ is called \emph{minimal} if it has no proper closed invariant subsets, that is,
if $K\subset X$ is non-empty, closed and $f(K)\subset K$, then $K=X$. It is easy to see that
a system $(X,f)$ is minimal if and only if every point in $X$ is a transitive point, i.e., $Trans(X, f)=X$.

A dynamical system $(X, f)$ is called an {\it E-system} if it is transitive and there is an invariant Borel
probability measure $\mu$ on $X$ with full support, i.e.,
$supp(\mu)=\{x\in X:$ for every neighborhood $U$ \mbox{of} $x$, $\mu(U)>0\}=X$.

Recall that two dynamical systems $(X,f)$ and $(Y,g)$ are called \emph{weakly disjoint}
if their product system $(X\times Y,f\times g)$ is transitive.
Using weak disjointness, we can define several kinds of dynamical systems \cite{BHM00,HY04}.
A dynamical system $(X, f)$ is called \emph{scattering} if it is weakly disjoint from any  minimal system;
\emph{strongly scattering} if it is weakly disjoint from any E-system;
\emph{mildly mixing} if it is weakly disjoint from any transitive system.

\subsection{Furstenberg families}
Let $\mathcal{P}$ denote the collection of all subsets of $\mathbb{N}$.
A subset $\mathcal F$ of $\mathcal P$ is called a \emph{Furstenberg family} (or just a \emph{family}),
if it is hereditary upward, i.e.,
\[\text{$F_1 \subset F_2$ and $F_1\in\mathcal F$ imply $F_2\in\mathcal{F}$.} \]
 A family $\mathcal{F}$ is called \emph{proper} if it is a non-empty
proper subset of $\mathcal P$,
i.e., it is neither empty nor all of $\mathcal P$.
Any non-empty collection $\mathcal A$ of subsets of $\mathbb{N}$ naturally generates a family
\[\mathcal F(\mathcal A) = \{F\subset\mathbb{N}:\ A \subset F \text{ for some } A\in\mathcal {A}\}.\]
Let $\mathcal F_{inf}$ be the family of all infinite subsets of $\mathbb{N}$,
and $\mathcal F_{cf}$ be the family of all cofinite subsets of $\mathbb{N}$.

A subset $F$ of $\mathbb{N}$ is called \emph{thick} if it contains arbitrarily long runs of positive integers,
i.e., for every $n\in\mathbb{N}$ there exists some $a_n\in\mathbb{N}$ such that $\{a_n, a_n+1,\dotsc, a_n+n\}\subset F$.
The family of all thick sets is denoted by $\mathcal F_{t}$.

\subsection{Topological dynamics via Furstenberg families}
The idea of using families to describe dynamical properties goes back at least to Gottschalk and Hedlund \cite{GH55}.
It was developed further by Furstenberg \cite{F81}.
There are serval ways to classify transitive systems via Furstenberg familes,
see~\cite{A97}, \cite{G04}, \cite{HY04}, \cite{W-Huang-X-Ye-2005} and~\cite{L2011}.
Here we just recall one of them.
It allows us to classify transitive systems by the hitting time sets of open sets.

Let $(X,f)$ be a dynamical system and $\mathcal{F}$ be a family.
The system $(X,f)$ is called \emph{$\mathcal F$-transitive}
if for every two non-empty open subsets $U,V$ of $X$, the hitting time set $N(U,V)$ is in $\mathcal F$;
\emph{$\mathcal F$-mixing} if $(X\times X, f\times f)$ is $\mathcal F$-transitive.

\begin{lem}[\cite{A97,Furstenberg-1967}]
Let $(X, f)$ be a dynamical system and $\mathcal F$ be a family.
Then
\begin{enumerate}
\item $(X,f)$ is weakly mixing if and only if it is $\mathcal F_{t}$-transitive.
\item $(X,f)$ is strongly mixing if and only if it is $\mathcal F_{cf}$-transitive.
\item $(X,f)$ is $\mathcal F$-mixing if and only if it is $\mathcal F$-transitive and weakly mixing.
\end{enumerate}
\end{lem}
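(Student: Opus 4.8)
My plan is to dispose of (2) by unwinding definitions, to reduce both directions of the non-trivial parts to a single ``alignment'' computation, and to handle the two genuine directions separately. Statement (2) is essentially a restatement of definitions: ``strongly mixing'' was defined to mean that $N(U,V)$ is cofinite for every pair of non-empty open sets, which is exactly the assertion that each such $N(U,V)$ lies in $\mathcal F_{cf}$; so I would say nothing further about it. For (1) and (3) I would assume throughout that $\mathcal F$ is proper (if $\mathcal F=\mathcal P$ then (3) is false), and I would use two elementary facts. First, a weakly mixing map $f$ is surjective, so $f^{-i}(V)$ is a non-empty open set whenever $V$ is. Second, an alignment observation obtained by chasing definitions: if $p,q\in\mathbb N$ are such that $A:=U_1\cap f^{-p}(U_2)$ and $C:=V_1\cap f^{-q}(V_2)$ are non-empty, then for every $n\in N(A,C)$ (with $n$ large if $q<p$) one has $n\in N(U_1,V_1)$ and $n+q-p\in N(U_2,V_2)$; informally, a transition $A\to C$ in time $n$ carries with it the transitions $U_1\to V_1$ in time $n$ and $U_2\to V_2$ in time $n+q-p$.

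Forward direction of (1). Assume $(X,f)$ is weakly mixing and fix non-empty open $U,V$ and $n\in\mathbb N$; the goal is a block of $n+1$ consecutive integers in $N(U,V)$. By the theorem of Furstenberg recalled in the introduction, $(X^{n+1},f^{(n+1)})$ is transitive, so the hitting-time set of the non-empty open sets $U\times\dots\times U$ and $V\times f^{-1}(V)\times\dots\times f^{-n}(V)$ is non-empty; unwinding this produces $k\in\mathbb N$ and points $x_0,\dots,x_n\in U$ with $f^{k+i}(x_i)\in V$ for $i=0,\dots,n$, i.e. $\{k,\dots,k+n\}\subseteq N(U,V)$. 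Letting $n$ vary shows $N(U,V)\in\mathcal F_t$.

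Reverse direction of (1); this is the main obstacle. Assume every $N(U,V)$ is thick; I need a common point of $N(U_1,V_1)$ and $N(U_2,V_2)$ for arbitrary non-empty open $U_1,U_2,V_1,V_2$, which gives transitivity of $f\times f$. One cannot simply intersect the two thick sets $N(U_1,V_1)$ and $N(U_2,V_2)$ --- thick sets can be disjoint. Instead I would choose $p\in N(U_1,U_2)$ and $q\in N(V_1,V_2)$ (non-empty since thick) and form the non-empty open sets $A,C$ as above. The alignment observation gives $N(A,C)\subseteq N(U_1,V_1)$, while shifting $N(A,C)$ by $q-p$ lands it in $N(U_2,V_2)$ (up to discarding a finite initial segment). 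Now $N(A,C)$ is itself thick, hence contains a sufficiently remote block of $|q-p|+1$ consecutive integers; inside it one finds $m$ with both $m$ and $m-(q-p)$ in $N(A,C)$, and then $m\in N(U_1,V_1)\cap N(U_2,V_2)$. The essential point is exactly this reduction: replacing the two possibly disjoint thick sets by the single thick set $N(A,C)$, whose runs are long enough to absorb the mismatch $q-p$ between the two alignments.

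Statement (3). The forward implication is immediate: feeding $U\times X$ and $V\times X$ into the $\mathcal F$-transitivity of $(X\times X,f\times f)$ yields $N(U,V)=N(U\times X,V\times X)\in\mathcal F$, so $(X,f)$ is $\mathcal F$-transitive, and since $\mathcal F$ is proper, $\mathcal F$-transitivity of the product forces its transitivity, i.e. weak mixing. For the converse, assume $(X,f)$ is $\mathcal F$-transitive and weakly mixing and take non-empty open $U_1\times U_2,\,V_1\times V_2$ in $X^2$. Here weak mixing does what thickness did in (1): since $f\times f$ is transitive, $N(U_1,U_2)\cap N(V_1,V_2)\neq\emptyset$, so one may take the \emph{same} shift $p$ in both coordinates. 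With $A=U_1\cap f^{-p}(U_2)$ and $C=V_1\cap f^{-p}(V_2)$, the alignment observation (now with $q=p$, so no residual shift) gives $N(A,C)\subseteq N(U_1,V_1)\cap N(U_2,V_2)$; since $(X,f)$ is $\mathcal F$-transitive, $N(A,C)\in\mathcal F$, and since $\mathcal F$ is hereditary upward, $N(U_1,V_1)\cap N(U_2,V_2)\in\mathcal F$. Hence $f\times f$ is $\mathcal F$-transitive, i.e. $(X,f)$ is $\mathcal F$-mixing. The subtlety to keep in mind is that $\mathcal F$ is closed upward but not under intersection, which is precisely why one needs weak mixing to furnish a single common shift $p$ that makes $N(A,C)$ a subset of the intersection.
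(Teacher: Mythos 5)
Your proof is correct. The paper does not actually prove this lemma --- it is quoted from Akin's book and Furstenberg's paper --- so there is no internal argument to compare against; what you give is the standard proof from those sources. Your two key moves are exactly the classical ones: for (1)$\Rightarrow$, applying Furstenberg's $n$-fold transitivity theorem to $U\times\dots\times U$ against $V\times f^{-1}(V)\times\dots\times f^{-n}(V)$ to produce a run of consecutive integers in $N(U,V)$ (using surjectivity of a transitive map on a compact space so that the $f^{-i}(V)$ are non-empty); and for the reverse direction and for (3), the ``alignment'' trick of replacing $N(U_1,V_1)\cap N(U_2,V_2)$ by the single set $N(A,C)$ with $A=U_1\cap f^{-p}(U_2)$, $C=V_1\cap f^{-q}(V_2)$, with thickness (resp.\ a common shift $p=q$ supplied by weak mixing) absorbing the mismatch $q-p$. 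Your observation that (3) requires $\mathcal F$ to be proper is also correct and is a genuine (if standard) hypothesis that the paper's statement leaves implicit: if $\mathcal F=\mathcal P$ then every system is $\mathcal F$-mixing, so the forward implication of (3) would fail.
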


\section{Multi-transitivity with respect to a vector}
In this section, we generalize the concept of multi-transitivity by introducing
multi-transitivity with respect to a vector and show some basic properties of multi-transitivity.

\begin{de}\label{def:muti-trans}
Let $(X, f)$ be a dynamical system and $\mathbf{a}=(a_{1}, a_{2}, \dotsc, a_r)$ be a vector in $\mathbb{N}^r$.
We say that $(X,f)$ is
\begin{enumerate}
\item \emph{multi-transitive with respect to the vector $\mathbf{a}$}
(or briefly \emph{$\mathbf{a}$-transitive})
if the product system $(X^r,  f^{(\mathbf{a})})$ is transitive,
where $f^{(\mathbf{a})}=f^{a_{1}}\times f^{a_{2}}\times \dotsb \times f^{a_{r}}$;
\item \emph{multi-transitive}
if it is multi-transitive with respect to $(1,2,\dotsc,n)$ for any $n\in\mathbb N$;
\item \emph{strongly multi-transitive}
if it is multi-transitive with respect to any vector in $\mathbb{N}^n$ and any $n\in\mathbb{N}$.
\end{enumerate}
\end{de}

It is clear that strong mixing is stronger than strong multi-transitivity, which in turn is stronger than
weak mixing. The authors in~\cite{D-Kwietniak-P-Oprocha-2010} showed that
there is no implication between weak mixing and multi-transitivity by constructing two special spacing shifts,
one is a multi-transitive non-weakly mixing system,
and the other is a weakly mixing non-multi-transitive system.
In fact, for every $m\geq 2$ they constructed a weakly mixing spacing shift
which is multi-transitive with respect to $(1,2\dotsc,m-1)$ but not for $(1,2,\dotsc,m)$.

\begin{lem}\label{lem:mul-tran-eq}
Let $(X,f)$ be a dynamical system and $n\in\mathbb{N}$.
Then $(X,f)$ is multi-transitive if and only if $(X,f^n)$ is multi-transitive.
\end{lem}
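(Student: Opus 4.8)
The plan is to prove both implications of the equivalence by unwinding what multi-transitivity means for each system and comparing the resulting product maps. Recall that $(X,f)$ is multi-transitive means $(X^m, f\times f^2\times\dotsb\times f^m)$ is transitive for every $m\in\mathbb{N}$, while $(X,f^n)$ is multi-transitive means $(X^m, f^n\times f^{2n}\times\dotsb\times f^{mn})$ is transitive for every $m\in\mathbb{N}$. So the statement amounts to: the family of product systems $\{f\times f^2\times\dotsb\times f^m : m\in\mathbb{N}\}$ is jointly ``uniformly transitive'' in the same sense as the family $\{f^n\times f^{2n}\times\dotsb\times f^{mn}: m\in\mathbb{N}\}$.

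For the forward direction, assume $(X,f)$ is multi-transitive and fix $m\in\mathbb{N}$; I want $(X^m, f^n\times f^{2n}\times\dotsb\times f^{mn})$ transitive. The key observation is that the map $f^n\times f^{2n}\times\dotsb\times f^{mn}$ on $X^m$ is a ``sub-diagonal'' of the map $f\times f^2\times\dotsb\times f^{mn}$ on $X^{mn}$: namely the coordinates $n, 2n, \dotsc, mn$ appearing in the exponents form a subset of $\{1,2,\dotsc,mn\}$, and transitivity of a product system restricted to a subset of coordinates (a factor obtained by projection) is inherited from transitivity of the full product. Since $(X,f)$ is multi-transitive, $(X^{mn}, f\times f^2\times\dotsb\times f^{mn})$ is transitive; projecting onto the coordinates indexed by $n,2n,\dotsc,mn$ gives that $(X^m, f^n\times f^{2n}\times\dotsb\times f^{mn})$ is transitive, as a factor of a transitive system is transitive. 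This handles the forward implication cleanly.

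For the converse, assume $(X,f^n)$ is multi-transitive and fix $m\in\mathbb{N}$; I want $(X^m, f\times f^2\times\dotsb\times f^m)$ transitive. The natural idea is to embed the exponents $1,2,\dotsc,m$ among multiples of $n$ by passing to a higher power of the product. Consider the system $(X^{mn}, g)$ where $g = f^n\times f^{2n}\times\dotsb\times f^{mn\cdot n}$, which is transitive since $(X,f^n)$ is multi-transitive (applied with $mn$ in place of $m$). Now I observe that the coordinates $1,2,\dotsc,m$ are all present among $\{n, 2n, \dotsc, n\cdot (mn)\} = n\cdot\{1,2,\dotsc,mn\}$... wait — they need not be, since $1$ is not a multiple of $n$. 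The correct route is instead: $(f\times f^2\times\dotsb\times f^m)^n = f^n\times f^{2n}\times\dotsb\times f^{mn}$, so if the $n$-th power of a system is transitive then the system itself is transitive, but that is false in general. The right tool is that for the specific structure here, transitivity of $(X^m,(f\times\dotsb\times f^m)^n)$ together with the fact that each power is itself a product of iterates lets one use that $f\times f^2\times\dotsb\times f^m$ is a factor of $f\times f^2\times\dotsb\times f^{mn}$, which in turn is a factor (by regrouping coordinates into $n$ blocks of $m$) of an $n$-fold product built from $f^n\times f^{2n}\times\dotsb\times f^{mn}$-type maps; I will realize $(X^m,f\times\dotsb\times f^m)$ as a factor of $(X^{mn}, f^n\times f^{2n}\times\dotsb\times f^{n^2 m})$ via the map sending $(x_1,\dotsc,x_m)$ to $(x_1, f(x_1)\text{-shifted coordinates}, \dotsc)$ — more precisely, using that $N(U,V)$ for the small system contains $N(U',V')$ for the big one along an appropriate arithmetic progression.

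The step I expect to be the main obstacle is precisely this converse: making the ``factor'' or ``arithmetic progression of hitting times'' argument precise so that transitivity genuinely transfers in the direction from the $n$-th power back to $f$. I anticipate that the cleanest way to close this gap is to argue at the level of hitting time sets: show that for non-empty open $U = U_1\times\dotsb\times U_m$ and $V = V_1\times\dotsb\times V_m$ in $X^m$, the set $N_{(X^m, f\times\dotsb\times f^m)}(U,V)$ equals, or contains a translate of, a hitting time set of the multi-transitive system $(X,f^n)$ applied to suitably chosen open sets in a power of $X$, using the identity $(f\times f^2\times\dotsb\times f^m)^k = f^k\times f^{2k}\times\dotsb\times f^{mk}$ and choosing $k$ to be a multiple of $n$. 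Once the hitting-time-set inclusion is established the conclusion is immediate, so the real work is bookkeeping the coordinates; I will present the argument with the forward direction first (which is essentially trivial via factors) and then give the converse via this hitting-set identity.
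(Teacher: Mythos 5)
Your forward direction is correct and is essentially the paper's argument: the coordinates $n,2n,\dotsc,mn$ of $(X^{mn},f\times f^2\times\dotsb\times f^{mn})$ project onto $(X^m,(f^n)\times(f^n)^2\times\dotsb\times(f^n)^m)$, and a factor of a transitive system is transitive.

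The converse, however, contains a genuine error that derails the whole second half. You write that ``if the $n$-th power of a system is transitive then the system itself is transitive'' is ``false in general'' --- you have the implication backwards. What fails in general is the \emph{other} direction: a transitive system need not be totally transitive. The direction you need is trivially true: if $k\in N_{g^n}(U,V)$ then $U\cap g^{-nk}(V)\neq\emptyset$, so $nk\in N_g(U,V)$; hence $N_{g^n}(U,V)\neq\emptyset$ forces $N_g(U,V)\neq\emptyset$, and transitivity of $(X^m,g^n)$ implies transitivity of $(X^m,g)$. Applying this with $g=f\times f^2\times\dotsb\times f^m$ and the identity $g^n=f^n\times f^{2n}\times\dotsb\times f^{mn}=(f^n)\times(f^n)^2\times\dotsb\times(f^n)^m$, which is transitive because $(X,f^n)$ is multi-transitive, finishes the proof in one line --- this is exactly the paper's argument, which you correctly wrote down and then discarded. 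The replacement route you sketch instead cannot be repaired: the exponents $1,2,\dotsc,m$ are not among the multiples of $n$ (for $n>1$), so $(X^m,f\times\dotsb\times f^m)$ is not a coordinate projection of any product of the form $f^n\times f^{2n}\times\dotsb$, and no regrouping of coordinates produces such a factor map. Your closing remark about ``choosing $k$ to be a multiple of $n$'' in the hitting-time sets is in fact the correct idea in disguise, but as written the converse is not proved.
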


\begin{proof}
Necessity.
Let $m\in\mathbb{N}$ and $\mathbf{a}=(1,2,\dotsc,m)$.
We want to show that $(X,f^n)$ is $\mathbf{a}$-transitive.
Let $U_1$, $U_2,\dotsc, U_m$, $V_1$, $V_2,\dotsc, V_m$ be non-empty open subsets of $X$.
For $i=1,2,\dotsc,n$, put $U_i'=U_1$, $U_{n+i}'=U_2, \dotsc, U_{(m-1)n+i}'=U_m$,
$V_i'=V_1$, $V_{n+i}'=V_2, \dotsc, V_{(m-1)n+i}'=V_m$.
Let $\mathbf{a}'=(1,2,\dotsc,mn)$.
By the transitivity of $(X^{mn},f^{(\mathbf{a}')})$, we have that
$$N_{f^{(\mathbf{a}')}}
(U_1'\times U_2'\times \dotsb \times U_{mn}',V_1'\times V_2'\times \dotsb \times V_{mn}')\neq\emptyset.$$
But we also have
$$N_{f^{(\mathbf{a}')}}(U_1'\times U_2'\times\dotsb\times U_{mn}',V_1'\times V_2'\times\dotsb\times V_{mn}')
\subset N_{(f^n)^{(\mathbf{a})}}(U_1\times U_2\times\dotsb\times U_{m},V_1\times V_2\times\dotsb\times V_{m}),$$
which implies that  $(X,f^n)$ is $\mathbf{a}$-transitive.

Sufficiency. Let $m\in\mathbb{N}$ and  $\mathbf{a}=(1,2,\dotsc,m)$.
By the fact $(f^{(\mathbf{a})})^n=(f^n)^{(\mathbf{a})}$ and $(X^m,(f^n)^{(\mathbf{a})})$ is transitive,
$(X,f)$ is $\mathbf{a}$-transitive. Thus $(X,f)$ is multi-transitive.
\end{proof}

\begin{prop}\label{prop:s-mul-tran-eq}
Let $(X,f)$ be a dynamical system, $n\in\mathbb{N}$ and $\mathbf{a}=(a_1,a_2,\dotsc,a_r)\in\mathbb{N}^r$.
Then the following conditions are equivalent:
\begin{enumerate}
  \item $(X,f)$ is strongly multi-transitive;
  \item $(X,f^n)$ is strongly multi-transitive;
  \item $(X^r,f^{(\mathbf{a})})$ is strongly multi-transitive;
  \item for every $k\in\mathbb{N}$, $(X^k, f\times f^2\times \dotsb \times f^k)$ is weakly mixing;
  \item $(X,f)$ is weakly mixing and multi-transitive.
\end{enumerate}
\end{prop}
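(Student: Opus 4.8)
The plan is to treat $(1)$ as the hub: show that $(1)$ implies each of $(2)$, $(3)$, $(4)$ by a formal substitution of vectors, that each of $(2)$, $(3)$, $(4)$ implies $(5)$ by elementary observations together with Lemma~\ref{lem:mul-tran-eq}, and finally that $(5)\Rightarrow(1)$, where all the real content lies.

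\emph{The formal directions.} For $(1)\Rightarrow(2)$, note $(f^{n})^{(\mathbf b)}=f^{(n\mathbf b)}$ for every vector $\mathbf b=(b_1,\dots,b_s)$, where $n\mathbf b=(nb_1,\dots,nb_s)\in\mathbb N^{s}$; hence strong multi-transitivity of $(X,f)$ hands $(X,f^{n})$ each vector directly. For $(1)\Rightarrow(3)$, since $(f^{(\mathbf a)})^{(\mathbf b)}=f^{(\mathbf c)}$ with $\mathbf c=(a_ib_j)_{1\le i\le r,\,1\le j\le s}\in\mathbb N^{rs}$, condition $(1)$ forces $(X^{r},f^{(\mathbf a)})$ to be $\mathbf b$-transitive for every $\mathbf b$. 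For $(1)\Rightarrow(4)$, weak mixing of $(X^{k},f^{(1,\dots,k)})$ is by definition transitivity of $(X^{2k},f^{(1,2,\dots,k,1,2,\dots,k)})$, an instance of $(1)$.

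\emph{$(2),(3),(4)\Rightarrow(5)$.} If $(X,f^{n})$ is strongly multi-transitive it is in particular weakly mixing and multi-transitive; since $n\cdot N_{(f\times f)^{n}}(A,B)\subseteq N_{f\times f}(A,B)$, transitivity of $(X^{2},(f\times f)^{n})$ forces transitivity of $(X^{2},f\times f)$, so $(X,f)$ is weakly mixing, and it is multi-transitive by Lemma~\ref{lem:mul-tran-eq}; thus $(5)$ holds. If $(X^{r},f^{(\mathbf a)})$ is strongly multi-transitive, then projecting the square of $f^{(\mathbf a)}$, respectively its level-$p$ product, onto the coordinates carrying exponent $a_1$ shows that $(X,f^{a_1})$ is weakly mixing, respectively multi-transitive, so again $(X,f)$ is weakly mixing and multi-transitive. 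Finally $(4)$ with $k=1$ gives weak mixing, and weak mixing of every $(X^{k},f^{(1,\dots,k)})$ forces each such system to be transitive, i.e.\ $(X,f)$ is multi-transitive.

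\emph{The crux: $(5)\Rightarrow(1)$.} I would route this through $(4)$. The step $(4)\Rightarrow(1)$ is soft: given $\mathbf a=(a_1,\dots,a_r)$ with $k=\max_i a_i$, Furstenberg's theorem applied to the weakly mixing system $(X^{k},f^{(1,\dots,k)})$ makes its $r$-fold product $(X^{kr},(f^{(1,\dots,k)})^{(r)})$ transitive, and the map sending the $j$-th copy of $X^{k}$ to its $a_j$-th coordinate is a surjective factor map onto $(X^{r},f^{(\mathbf a)})$ — the chosen coordinates lie in pairwise distinct copies, so there is no diagonal collapse even when the $a_i$ coincide — whence $(X^{r},f^{(\mathbf a)})$ is transitive. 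So everything reduces to $(5)\Rightarrow(4)$: \emph{if $(X,f)$ is weakly mixing and multi-transitive then $(X^{k},f^{(1,\dots,k)})$ is weakly mixing for every $k$}. This is the point I expect to be the main obstacle, since one must promote the bare transitivity of $(X^{k},f^{(1,\dots,k)})$ (given by multi-transitivity) to weak mixing using only the weak mixing of the base. The attack I would try is Furstenberg-style: weak mixing of $(X,f)$ makes every finite intersection $\bigcap_{i}N_f(U_i,V_i)$ thick, and, combining this with the identity $N_f(U,V)-t=N_f(U,f^{-t}V)$ (legitimate since a transitive $f$ is onto) and with multi-transitivity of $f$ and of all its iterates (Lemma~\ref{lem:mul-tran-eq}), one tries to show that each box hitting-time set $N_{f^{(1,\dots,k)}}(\mathcal U,\mathcal V)=\bigcap_{i=1}^{k}\{\,n:\ f^{in}(U_i)\cap V_i\neq\emptyset\,\}$ is thick, which by the characterization of weak mixing as $\mathcal F_{t}$-transitivity would finish the argument. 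The delicate point is that thickness is not preserved when one intersects the dilated sets $\{\,n:\ in\in N_f(U_i,V_i)\,\}$ over different $i$ — weak mixing by itself cannot do this, for it does not imply multi-transitivity — so the multi-transitivity hypothesis must be fed in at exactly the right level (and for the right iterate $f^{q}$) to synchronize these dilated sets; making that bookkeeping close is, I expect, the heart of the proposition. (An alternative, perhaps cleaner, route would be to first isolate — as the paper does later — a Furstenberg family whose transitivity is equivalent to multi-transitivity, and then invoke the fact that weak mixing turns $\mathcal F$-transitivity into $\mathcal F$-mixing, i.e.\ into transitivity of the square; this again comes down to the same reduction $(5)\Rightarrow(4)$.)
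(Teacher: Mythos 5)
Your easy directions are all fine and essentially match the paper's: the substitutions for $(1)\Rightarrow(2),(3),(4)$, the derivations of $(5)$ from each of $(2),(3),(4)$, and the factor-map argument for $(4)\Rightarrow(1)$ (the paper does exactly this with $k=\max_i a_i$ and Furstenberg's theorem). But the one implication you correctly identify as carrying all the content, $(5)\Rightarrow(4)$, is not actually proved: you describe an attack via thickness of the hitting sets of $f\times f^2\times\dotsb\times f^k$ and then concede that synchronizing the dilated sets $\{n: in\in N_f(U_i,V_i)\}$ is an unresolved ``bookkeeping'' problem. That is a genuine gap, and the thickness route is the wrong tool here --- you do not need the product hitting sets to be thick, only the square of $(X^k,f^{(1,\dots,k)})$ to be transitive, and for that a single well-chosen time suffices.

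The missing idea is a pre-combination trick that uses the weak mixing of the \emph{base} map to collapse the $2k$ pairs of open sets into $k$ pairs before multi-transitivity is invoked at all. Given non-empty open $U_1,\dots,U_{2k},V_1,\dots,V_{2k}$, weak mixing of $(X,f)$ makes $(X^{2k},f^{(2k)})$ transitive (Furstenberg), so there is one $m$ with $U_{2i-1}\cap f^{-m}(U_{2i})\neq\emptyset$ and $V_{2i-1}\cap f^{-m}(V_{2i})\neq\emptyset$ for all $i$; set $U_i'=U_{2i-1}\cap f^{-m}(U_{2i})$ and $V_i'=V_{2i-1}\cap f^{-m}(V_{2i})$. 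A single application of the transitivity of $f\times f^2\times\dotsb\times f^k$ (which is just multi-transitivity, no iterate of $f$ needed) to the sets $U_i',V_i'$ gives one $n_0$ with $U_i'\cap f^{-in_0}(V_i')\neq\emptyset$, and since $f^{in_0}$ commutes with $f^{m}$ this single $n_0$ simultaneously satisfies $U_{2i-1}\cap f^{-in_0}(V_{2i-1})\neq\emptyset$ and $U_{2i}\cap f^{-in_0}(V_{2i})\neq\emptyset$ for every $i$, i.e.\ $(X^{2k}, (f\times\dotsb\times f^k)\times(f\times\dotsb\times f^k))$ is transitive. No Furstenberg family, no thickness, and no iterate-level tuning of the multi-transitivity hypothesis is required; the weak mixing is spent entirely on choosing $m$.
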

\begin{proof}
(1) $\Rightarrow$ (2) Let $l\in\mathbb{N}$ and $\mathbf{a}'=(a_1',a_2',\dotsc,a_l')\in\mathbb{N}^l$.
As $(X,f)$ is strongly multi-transitive,
$(X,f)$ is multi-transitive with respect to the vector $(na_1',na_2',\dotsc,na_l')$,
which implies that $(X^l, $ $(f^{n})^{(\mathbf{a}')})$ is transitive.
Then $(X,f^n)$ is multi-transitive with respect to the vector $\mathbf{a}'$,
and $(X,f^n)$ is also strongly multi-transitive.

(2) $\Rightarrow$ (3) Let $l\in\mathbb{N}$ and $\mathbf{a}'=(a_1',a_2',\dotsc,a_l')\in\mathbb{N}^l$.
As $(X,f^n)$ is strongly multi-transitive, $(X,f^n)$ is multi-transitive with respect to the vector
$$(a_1a_1',a_2a_1',\dotsc,a_ra_1',a_1a_2',a_2a_2',\dotsc,a_ra_2',\dotsc,a_1a_l',a_2a_l',\dotsc,a_ra_l'),$$
which implies that
$(X^{rl},(f^{(\mathbf{a})})^{(\mathbf{a}')})$ is transitive.
Thus $(X^r,f^{(\mathbf{a})})$ is multi-transitive with respect to the vector $\mathbf{a}'$.

(3) $\Rightarrow$ (4) Let $k\in\mathbb{N}$ and $\mathbf{a}'=(1,1,2,2,\dotsc,k,k)\in\mathbb{N}^{2k}$.
As $(X^r,f^{(\mathbf{a})})$ is strongly multi-transitive,
$(X^r, f^{(\mathbf{a})})$ is multi-transitive with respect to the vector $\mathbf{a}'$,
which implies that
$(X^{2k}, $ $f^{a_1}\times f^{a_1}\times f^{2a_1}\times f^{2a_1}\times\dotsb\times f^{ka_1}\times f^{ka_1})$
is transitive. Therefore $(X^k,f\times f^2\times \dotsb \times f^k)$ is weakly mixing.

(4) $\Rightarrow$ (1)
Let $l\in\mathbb{N}$ and $\mathbf{a}'=(a_1',a_2',\dotsc,a_l')\in\mathbb{N}^l$.
Without loss of generality, assume that $a_i'\leq a_{i+1}'$ for $i=1,2,\dotsc,l-1$.
Let $g =f\times f^2\times\dotsb \times f^{a_l'}$. By the assumption, $(X^{a_l'},g)$ is weakly mixing,
then $(X^{la_l'}, g^{(l)})$ is transitive, which implies that
$(X^l,f^{(\mathbf{a}')})$ is transitive.
Thus $(X,f)$ is multi-transitive with respect to the vector $\mathbf{a}'$.

(5) $\Rightarrow$ (4)
Fix $k\in\mathbb{N}$ and let $U_1,U_2,\dotsc, U_{2k}$,
$V_1,V_2,\dotsc,V_{2k}$ be non-empty open subsets of $X$.
Since $(X,f)$ is weakly mixing,
$(X^{2k},f^{(2k)})$ is transitive and then there exists $m\in\mathbb{N}$
such that
\[U_{2i-1}\cap f^{-m}(U_{2i})\neq \emptyset\text{ and }V_{2i-1}\cap f^{-m}(V_{2i})\neq \emptyset,\ i=1,2,\dotsc, k. \]
Put  $U_i'=U_{2i-1}\cap f^{-m}(U_{2i})$ and $V_i'=V_{2i-1}\cap f^{-m}(V_{2i})$, $i=1,2,\dotsc, k$.
By the transitivity of $f\times f^2\times\dotsb\times f^k$,
there is $n_0\in \mathbb{N}$ such that
\[U_i'\cap f^{-i n_0}(V_i')\neq \emptyset,\ i=1,2,\dotsc, k.\]
That is
\[U_{2i-1}\cap f^{-m}(U_{2i})\cap f^{-in_0}(V_{2i-1}\cap f^{-m}(V_{2i}))\neq \emptyset, i=1,2,\dotsc, k,\]
which implies that
\[U_{2i-1}\cap f^{-in_0}(V_{2i-1})\neq\emptyset\text{ and }
U_{2i}\cap f^{-in_0}(V_{2i})\neq\emptyset, i=1,2,\dotsc, k. \]
Therefore $(X^k,f\times f^2\times \dotsb \times f^k)$ is weakly mixing.

(4) $\Rightarrow$ (5) is obvious.
\end{proof}

Though weakly mixing systems may not be multi-transitive, we can show
that mildly mixing systems and HY-systems are strongly multi-transitive.
Recall that a system $(X,f)$ is called \emph{mildly mixing}
if for any transitive system $(Y,g)$, the product system $(X\times Y,f\times g)$
is transitive.

\begin{prop}
Let $(X,f)$ be a dynamical system.
If $(X,f)$ is mildly mixing, then for every $\mathbf{a}=(a_1,a_2,\dotsc,a_r)\in\mathbb{N}^r$,
the system $(X^r,f^{(\mathbf{a})})$ is also mildly mixing.
In particulary, $(X,f)$ is strongly multi-transitive.
\end{prop}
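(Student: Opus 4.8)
The plan is to prove the stronger assertion directly — that $(X^r,f^{(\mathbf{a})})$ is mildly mixing — and then to read off strong multi-transitivity in one line, since ``$(X,f)$ is $\mathbf{b}$-transitive for \emph{every} vector $\mathbf{b}$'' is precisely Definition~\ref{def:muti-trans}(3). I would phrase mild mixing through Furstenberg families, using the classical characterization of Furstenberg and Weiss: a system $(Z,h)$ is mildly mixing if and only if $N_h(U,V)$ is an $IP^{*}$-set (a set meeting every $IP$-set) for all non-empty open $U,V\subseteq Z$. Since the collection $\mathcal F_{ip^*}$ of all $IP^{*}$-sets is a family and boxes form a base for the topology of $X^r$, it suffices to check that $N_{f^{(\mathbf{a})}}(U_1\times\dotsb\times U_r,\,V_1\times\dotsb\times V_r)\in\mathcal F_{ip^*}$ for all non-empty open $U_i,V_i\subseteq X$.

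The key identity, immediate from the definitions, is
\[N_{f^{(\mathbf{a})}}(U_1\times\dotsb\times U_r,\,V_1\times\dotsb\times V_r)=\bigcap_{i=1}^{r}a_i^{-1}N_f(U_i,V_i),\qquad\text{where }a^{-1}S:=\{n\in\mathbb{N}:\ an\in S\}.\]
Since $(X,f)$ is mildly mixing, each $N_f(U_i,V_i)$ lies in $\mathcal F_{ip^*}$, so the statement comes down to two closure properties of $\mathcal F_{ip^*}$: (i) it is closed under finite intersections (the standard consequence of Hindman's theorem); and (ii) $S\in\mathcal F_{ip^*}$ implies $a^{-1}S\in\mathcal F_{ip^*}$ for every $a\in\mathbb{N}$. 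For (ii), observe that any $IP$-set $FS(\langle p_k\rangle_{k\ge 1})$ has the $a$-dilate $\{ap:\ p\in FS(\langle p_k\rangle)\}=FS(\langle ap_k\rangle_{k\ge 1})$, again an $IP$-set, which the $IP^{*}$-set $S$ must meet; pulling a witnessing element back through multiplication by $a$ gives an element of $FS(\langle p_k\rangle)\cap a^{-1}S$, and as the $IP$-set was arbitrary, $a^{-1}S\in\mathcal F_{ip^*}$. Applying (ii) to each factor and then (i) to the intersection puts the right-hand side in $\mathcal F_{ip^*}$, so $(X^r,f^{(\mathbf{a})})$ is mildly mixing; in particular it is transitive, i.e.\ $(X,f)$ is $\mathbf{a}$-transitive, and as $\mathbf{a}$ was arbitrary, $(X,f)$ is strongly multi-transitive (one could also note that mild mixing yields weak mixing and that the vectors $(1,2,\dotsc,n)$ give multi-transitivity, then invoke Proposition~\ref{prop:s-mul-tran-eq}).

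I expect the only genuinely non-routine point to be the $IP^{*}$-calculus, above all step (ii) — everything else is unwinding definitions. A reader who prefers to avoid $IP$-sets can instead assemble the result from two elementary lemmas: \emph{(a)} mild mixing passes to each iterate $(X,f^{n})$ — given a transitive $(Y,g)$, the cyclic tower $T=Y\times\{0,1,\dotsc,n-1\}$ with $\tilde g(y,i)=(y,i+1)$ for $i<n-1$ and $\tilde g(y,n-1)=(g(y),0)$ is transitive, so mild mixing of $(X,f)$ makes $(X\times T,f\times\tilde g)$ transitive, and transitivity of this cyclic tower forces transitivity of the $n$-th power of its base map, which is $(X\times Y,f^{n}\times g)$ — and \emph{(b)} mild mixing is closed under finite products, which follows at once from associativity of the product and the definition; one then applies (a) and (b) to $(X^r,f^{(\mathbf{a})})=(X,f^{a_1})\times\dotsb\times(X,f^{a_r})$. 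The only care needed in (a) is the easy verification that $(T,\tilde g)$ is transitive.
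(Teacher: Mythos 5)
Your fallback argument --- items \emph{(a)} and \emph{(b)} at the end --- is precisely the paper's proof: the authors induct on $r$, proving the base case ($(X,f^k)$ is mildly mixing) with exactly your cyclic tower $Y\times\{0,\dotsc,k-1\}$ and the observation that return times between sets in the zero floor are multiples of $k$, and handling the inductive step by the same ``mild mixing is closed under products'' computation ($(X_2\times Y,f_2\times g)$ is transitive, hence so is $(X_1\times X_2\times Y,f_1\times f_2\times g)$). That route is complete and correct, so the proposition is proved.

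Your headline route, however, has a genuine gap. The $IP^{*}$ characterization of mild mixing due to Furstenberg and Weiss is a theorem about \emph{measure-preserving} systems. Here mild mixing means weak disjointness from every \emph{topologically transitive} system, and the implication you actually need to close the argument --- ``$N(U,V)$ is $IP^{*}$ for all opene $U,V$ implies mildly mixing'' --- is exactly the direction that does not follow from the $IP^{*}$ calculus. To get weak disjointness from a transitive $(Y,g)$ you must show $N_f(U,V)\cap N_g(W_1,W_2)\neq\emptyset$, and an $IP^{*}$ set is only guaranteed to meet sets containing an $IP$-set; but hitting-time sets of transitive systems need not contain $IP$-sets. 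For an irrational rotation $R_\alpha$ and small arcs $W_1,W_2$ placed so that $N(W_1,W_2)\subset\{n:\ \|n\alpha-\tfrac12\|<\varepsilon\}$, no triple $n,m,n+m$ can all lie in $N(W_1,W_2)$ (summing the three conditions forces $\|\tfrac12\|<3\varepsilon$), so $N(W_1,W_2)$ contains no $IP$-set; indeed the $IP^{*}$ set $\{n:\ \|n\alpha\|<\varepsilon\}$ is disjoint from it. So $IP^{*}$-transitivity does not deliver weak disjointness even from rotations by this argument, and you cannot simply cite the characterization in the topological category. (Your identity $N_{f^{(\mathbf{a})}}(\prod U_i,\prod V_i)=\bigcap_i a_i^{-1}N_f(U_i,V_i)$ and the closure properties (i), (ii) of $\mathcal F_{ip^{*}}$ are all fine; the problem is solely the unjustified equivalence they are plugged into.) Since your elementary alternative already carries the whole proof, I would promote it to the main argument and drop or substantially qualify the $IP^{*}$ discussion.
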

\begin{proof}
We proceed by induction on the length $r$ of $\mathbf{a}$.
As a base case, when $r=1$ we need to show that for any $k\in\mathbb{N}$, $(X,f^k)$ is also mildly mixing.
Fix a $k\in\mathbb{N}$ and let $(Y,g)$ be a transitive system.
We want to show that  $(X\times Y,f^k\times g)$ is transitive.
Let $U_1$, $U_2$ be two non-empty open subsets of $X$ and $W_1$,  $W_2$ be two non-empty open subsets of $Y$.
Pick a transitive point $y$ of $(Y,g)$.
Let $K=\{0,1,\dotsc,k-1\}$ with discrete topology and $Z=Y\times K$. Define a map $h: Z\to Z$ as
\[h(x,i)=\begin{cases}
  (x,i+1),& i=0,\dots,k-2,\\
  (g(x),0),& i=k-1.
\end{cases}
\]
It is not hard to verify that $(Z, h)$ is a transitive system and $(y,0)$ is a transitive point of $(Z, h)$.
Let $W_1'=W_1\times \{0\}$ and $W_2'=W_2\times \{0\}$.
Since $(X,f)$ is mildly mixing, $(X\times Z,f\times h)$ is transitive.
Then there exists $n\in\mathbb{N}$ such that
\[U_1\cap f^{-n}(U_2)\neq\emptyset \text{ and }
W_1'\cap h^{-n}(W_2')\neq\emptyset.\]
By the construction of $h$, we have $k|n$. Let $m=n/k$. Then
\[U_1\cap (f^k)^{-m}(U_2)\neq\emptyset \text{ and }
W_1\cap g^{-m}(W_2)\neq\emptyset,\]
which  imply  that $(X\times Y,f^k\times g)$ is transitive.

For the inductive step, let $r > 1$ be an integer, and assume that the result holds for $r-1$.
Let $\mathbf{a}=(a_1,\dotsc,a_{r-1},a_r)$ be a vector in $\mathbb{N}^r$ with length $r$.
By the induction hypothesis, we have that $(X^{r-1},f^{(\mathbf{a}')})$ is mildly mixing,
where $\mathbf{a'}=(a_1,\dotsc,a_{r-1})$.
Let $(Y,g)$ be a transitive system.
As $(X,f^{a_r})$ is a mildly mixing system, we have $(X\times Y,f^{a_r}\times g)$ is transitive.
Again, as $(X^{r-1},f^{(\mathbf{a}')})$ is a mildly mixing system,
we have $(X^{r-1}\times X \times Y,f^{(\mathbf{a}')}\times f^{a_r}\times g)$ is transitive,
that is $(X^r\times Y,f^{(\mathbf{a})}\times g)$ is transitive.
Then $(X^r,f^{(\mathbf{a})})$ is mildly mixing.
Thus the result holds for $r$, and this completes the proof.
\end{proof}

Let $(X, f)$ be a dynamical system. We say that $(X, f)$ has
\emph{dense small periodic sets} if for every non-empty open subset $U$ of $X$
there exists a closed subset $Y$ of $U$ and $k \in\mathbb{N}$
such that $Y$ is invariant for $f^ k$ (i.e., $f^k(Y)\subset Y$).
Clearly, if $(X,f)$ has dense periodic points,
then it  has dense small periodic sets.
We say that $(X,f)$ is an \emph{HY-system}
if it is totally transitive and has dense small periodic sets.

\begin{lem}\label{lem:HY-sys-weak-disjoint}
Let $(X, f)$ be a dynamical system.
If $(X,f)$ is an HY-system, then $(X,f)$ is weakly disjoint from any totally transitive system.
\end{lem}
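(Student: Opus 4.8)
The plan is to verify weak disjointness directly. Let $(Y,g)$ be an arbitrary totally transitive system. I would show that for all non-empty open $U_1,U_2\subseteq X$ and $W_1,W_2\subseteq Y$ one has $N(U_1,U_2)\cap N(W_1,W_2)\neq\emptyset$; since this intersection equals $N(U_1\times W_1,U_2\times W_2)$ for the map $f\times g$, it follows that $(X\times Y,f\times g)$ is transitive, i.e. $(X,f)$ is weakly disjoint from $(Y,g)$.

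The crucial step is to show that $N(U_1,U_2)$ always contains an entire arithmetic progression. By transitivity of $(X,f)$, pick $n_0\in N(U_1,U_2)$; then some point of $U_1$ is sent by $f^{n_0}$ into $U_2$, so by continuity there is a non-empty open set $U'$ with $U'\subseteq U_1$ and $f^{n_0}(U')\subseteq U_2$. Since $(X,f)$ has dense small periodic sets, there are a non-empty closed set $Z\subseteq U'$ and $k\in\mathbb{N}$ with $f^k(Z)\subseteq Z$. Fix $p\in Z$. Then $f^{kj}(p)\in Z\subseteq U'$ for every $j\geq 0$ (immediate induction), hence $f^{n_0+kj}(p)=f^{n_0}(f^{kj}(p))\in f^{n_0}(U')\subseteq U_2$ while $p\in U'\subseteq U_1$; therefore $n_0+kj\in N(U_1,U_2)$ for all $j\geq 0$, so $\{\,n_0+kj:j\geq 0\,\}\subseteq N(U_1,U_2)$.

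It remains to meet this progression with $N(W_1,W_2)$, and here I would use total transitivity of $(Y,g)$. Any topologically transitive map on a compact metric space is onto (its image is closed and meets every non-empty open set, hence is dense), so $g$ is onto and $g^{-n_0}(W_2)$ is a non-empty open set. Since $(Y,g^k)$ is transitive, there is $j\geq 1$ with $g^{kj}(W_1)\cap g^{-n_0}(W_2)\neq\emptyset$, which says exactly that $g^{n_0+kj}(W_1)\cap W_2\neq\emptyset$, i.e. $n_0+kj\in N(W_1,W_2)$. Together with the previous paragraph this gives $n_0+kj\in N(U_1,U_2)\cap N(W_1,W_2)$, finishing the proof. (Note that only transitivity of $(X,f)$, together with its dense small periodic sets, has actually been used; total transitivity of $(X,f)$ is not needed here.)

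I do not expect a genuine obstacle in carrying this out; the argument is short once the right intermediate claim is isolated. The one point worth emphasising is why one aims for an arithmetic progression inside $N(U_1,U_2)$: weaker ``largeness'' properties such as thickness would not suffice, because an arbitrary hitting set of a totally transitive system need not be syndetic and so need not meet a given thick set, whereas a progression $n_0+k\mathbb{Z}_{\geq 0}$ is forced to meet $N(W_1,W_2)$ since total transitivity makes $N(W_1,W_2)$ intersect every congruence class modulo $k$. The device that produces the progression is the closed, $f^k$-invariant subset handed to us by the ``dense small periodic sets'' property: it keeps the single point $p$ inside the small set $U'$ at every time in $k\mathbb{Z}_{\geq 0}$, so the single return time $n_0$ automatically propagates along the whole progression.
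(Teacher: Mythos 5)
Your proof is correct and follows essentially the same route as the paper's: both extract a closed $f^k$-invariant set inside $U_1\cap f^{-n_0}(U_2)$ to place a full arithmetic progression $n_0+k\mathbb{Z}_{\geq 0}$ inside $N(U_1,U_2)$, and then use transitivity of $g^k$ to make $N(W_1,W_2)$ meet that progression. Your added justification that $g$ is surjective (so $g^{-n_0}(W_2)\neq\emptyset$) and your remark that total transitivity of $(X,f)$ is not needed are both accurate refinements of the paper's argument.
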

\begin{proof}
Let $(Y,g)$ be a totally transitive system. We want to show that $(X\times Y,f\times g)$ is transitive.
Let $U_1$ and $U_2$ be two non-empty open subsets of $X$,
$V_1$ and $V_2$ be two non-empty open subsets of $Y$.
By the transitivity of $(X,f)$, there exists $l\in\mathbb{N}$ such that $U_1\cap f^{-l}(U_2)\neq\emptyset$.
As $(X,f)$ has dense small period sets,
there exists a closed subset $F$ of $U_1\cap f^{-l}(U_2)$ and $n\in\mathbb{N}$
such that $f^n(F)\subset F$ and then $n\mathbb{N}+l\subset N_{f}(U_1,U_2)$.
Since $g^{-l}(V_2)$ is a non-empty open set of $Y$ and $(Y,g^n)$ is transitive,
there is  $i\in\mathbb{N}$ such that $V_1\cap g^{-ni-l}(V_2)\neq\emptyset$,
i.e., $ni+l\in N_{g}(V_1, V_2)$.
Therefore $N_{f}(U_1,U_2)\cap N_{g}(V_1, V_2)\neq\emptyset$,
which implies that $(X\times Y,f\times g)$ is transitive.
\end{proof}

\begin{prop}\label{prop:HY-SMT}
Let $(X, f)$ be a dynamical system.
If $(X,f)$ is an HY-system, then for every $\mathbf{a}=(a_1,a_2,\dotsc,a_r)\in\mathbb{N}^r$,
the system $(X^r,f^{(\mathbf{a})})$ is an HY-system.
In particular, $(X,f)$ is strongly multi-transitive.
\end{prop}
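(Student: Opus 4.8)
The plan is to prove the stronger statement that $(X^r,f^{(\mathbf{a})})$ is itself an HY-system; strong multi-transitivity then follows immediately, since an HY-system is in particular transitive, so that $(X,f)$ is $\mathbf{a}$-transitive for every vector $\mathbf{a}$. (Alternatively one could deduce strong multi-transitivity afterwards from Proposition~\ref{prop:s-mul-tran-eq}.) I would organize the argument around two elementary closure properties of the class of HY-systems.

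First I would record that if $(X,f)$ is an HY-system then $(X,f^k)$ is an HY-system for every $k\in\mathbb{N}$: total transitivity is clear because $(X,(f^k)^m)=(X,f^{km})$ is transitive; and if $Y\subseteq U$ is a non-empty closed set with $f^n(Y)\subseteq Y$, then $(f^k)^n(Y)=f^{kn}(Y)\subseteq Y$, so the dense small periodic sets of $(X,f)$ serve also for $(X,f^k)$. Next I would show that the product of two HY-systems $(X,f)$ and $(Y,g)$ is an HY-system. Dense small periodic sets are easy: a non-empty open subset of $X\times Y$ contains a box $U\times V$, and if $Y_X\subseteq U$, $Y_Y\subseteq V$ are non-empty closed sets with $f^k(Y_X)\subseteq Y_X$ and $g^l(Y_Y)\subseteq Y_Y$, then the closed set $Y_X\times Y_Y$ is invariant under $(f\times g)^{kl}$. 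For total transitivity I must check that $(X\times Y,f^n\times g^n)$ is transitive for every $n\in\mathbb{N}$; by the first step $(X,f^n)$ is an HY-system and $(Y,g^n)$ is totally transitive, so Lemma~\ref{lem:HY-sys-weak-disjoint} applies and gives transitivity of $(X,f^n)\times(Y,g^n)$.

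Finally, iterating the product step over the number of factors, any finite product of HY-systems is an HY-system. Writing $(X^r,f^{(\mathbf{a})})=(X,f^{a_1})\times(X,f^{a_2})\times\dotsb\times(X,f^{a_r})$ and using the first step to see that each factor $(X,f^{a_i})$ is an HY-system, we conclude that $(X^r,f^{(\mathbf{a})})$ is an HY-system. In particular it is transitive, which is exactly $\mathbf{a}$-transitivity of $(X,f)$; since $\mathbf{a}$ was arbitrary, $(X,f)$ is strongly multi-transitive.

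The only genuinely non-trivial ingredient is the total transitivity of the product, and the main point to get right is to feed Lemma~\ref{lem:HY-sys-weak-disjoint} the right pair, namely the $n$-th powers $(X,f^n)$ and $(Y,g^n)$ rather than the original systems; this is why the closure of HY-systems under passing to $f^k$ (the first step) is needed, and why it is cleaner to induct on the number of product factors than to argue directly about $(X^r,f^{(\mathbf{a})})$.
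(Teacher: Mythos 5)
Your proof is correct and follows essentially the same route as the paper: an induction on the number of factors whose base case is that powers of HY-systems are again HY-systems, and whose inductive step applies Lemma~\ref{lem:HY-sys-weak-disjoint} to the $k$-th powers of the two factors to get total transitivity of the product. The only difference is cosmetic---you package the inductive step as a general closure property (the product of two HY-systems is an HY-system) rather than arguing directly about $X^{r-1}\times X$ with $g=f^{(\mathbf{a}')}$ as the paper does.
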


\begin{proof}
We proceed by induction on the length $r$ of $\mathbf{a}$.
The base case  $r=1$ is obvious since it is easy to check that for every $k\in\mathbb{N}$,
$(X,f^k)$ is also an HY-system.

For the inductive step, let $r > 1$ be an integer, and assume that the result holds for $r-1$.
Let $\mathbf{a}=(a_1,\dotsc,a_{r-1},a_r)$ be a vector in $\mathbb{N}^r$ with length $r$.
We need to show that $(X^r,f^{(\mathbf{a})})$ is an HY-system.
It is clear that $(X^r, f^{(\mathbf{a})})$ has dense small periodic sets.
Then it suffices to show that $(X^r, f^{(\mathbf{a})})$ is totally transitive.
By the induction hypothesis, we have that $(X^{r-1},f^{(\mathbf{a}')})$ is an HY-system,
where $\mathbf{a'}=(a_1,\dotsc,a_{r-1})$. Denote $Y=X^{r-1}$ and $g=f^{(\mathbf{a}')}$.
Fix a $k\in\mathbb{N}$. We need to prove that $(Y\times X, g^k\times f^{ka_{r}})$ is transitive.
By the induction hypothesis and the base case,
both $(Y,g^k)$ and $(X,f^{ka_{r}})$ are HY-systems.
Then by Lemma~\ref{lem:HY-sys-weak-disjoint}, we have that $(Y\times X, g^k\times f^{ka_{r}})$ is transitive.
\end{proof}

\begin{rem}
Here we give a self-contained proof of Proposition~\ref{prop:HY-SMT}.
As pointed out by the reviewer,
it is known that every HY-system is weakly mixing (see~\cite{W-Huang-X-Ye-2005} or~\cite{O10}).
It can be used to simplify proofs of Lemma~\ref{lem:HY-sys-weak-disjoint} and Proposition~\ref{prop:HY-SMT}
since we need only to prove multi-transitivity.
\end{rem}

\section{Characterization of multi-transitivity}
In this section, we will use the hitting time sets of two non-empty open subsets to characterize multi-transitivity.
We first introduce a new kind of Furstenberg family.
\begin{de}
Let $\mathbf{a}=(a_1,a_2,\dotsc,a_r)$ be a vector in $\mathbb{N}^r$.
We define \emph{the family generated by the vector $\mathbf{a}$},
denoted by $\mathcal{F}[\mathbf{a}]$, as
\begin{align*}
\{F\subset\mathbb{N}:\ \text{for every } \mathbf{n}=(n_1,n_2,\dotsc,n_r)\in\mathbb{Z}_+^r,
\text{ there exists }k\in\mathbb{N} \text{ such that }  k\mathbf{a}+\mathbf{n}\in F^r\},
\end{align*}
where $k\mathbf{a}+\mathbf{n}=(ka_1+n_1,ka_2+n_2,\dotsc,ka_r+n_r)$.
\end{de}

Using the family $\mathcal{F}[\mathbf{a}]$, we have the following characterization of multi-transitivity
with respect to $\mathbf{a}$.
\begin{thm}\label{thm:trans-vetor-equ-condi}
Let $(X,f)$ be a dynamical system and $\mathbf{a}=(a_1,a_2,\dotsc,a_r)\in \mathbb{N}^r$.
Then $(X,f)$ is $\mathbf{a}$-transitive if and only if  it is $\mathcal{F}[\mathbf{a}]$-transitive.
\end{thm}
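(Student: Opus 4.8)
The plan is to unwind both sides to the level of the hitting time set $N_{f^{(\mathbf{a})}}(U_1\times\dotsb\times U_r, V_1\times\dotsb\times V_r)$ in the product system and show that it equals $\{k\in\mathbb N:\ k\mathbf a\in \mathbf{N}(U,V)\}$ where $\mathbf{N}(U,V)$ denotes the ``product'' hitting set. Concretely, $k$ lies in this set iff $f^{ka_i}(U_i)\cap V_i\ne\emptyset$ for every $i=1,\dots,r$; thus $(X,f)$ is $\mathbf a$-transitive iff for all choices of nonempty open $U_i,V_i$ there is $k$ with $ka_i\in N_f(U_i,V_i)$ for all $i$ simultaneously. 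The task is to match this against the definition of $\mathcal F[\mathbf a]$-transitivity, which says: for all nonempty open $U,V$ and all $\mathbf n\in\mathbb Z_+^r$ there is $k$ with $ka_i+n_i\in N_f(U,V)$ for all $i$.

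For the ``if'' direction (assume $\mathcal F[\mathbf a]$-transitive, prove $\mathbf a$-transitive): given $U_1,\dots,U_r,V_1,\dots,V_r$, I would first use transitivity of $(X,f)$ (which follows, e.g., by taking all $U_i$ equal and all $V_i$ equal, or directly since $\mathcal F[\mathbf a]\subset\mathcal F_{inf}$) to pick, for each $i$, some $m_i\in\mathbb Z_+$ and a nonempty open set $U_i'$ with $f^{m_i}(U_i')\subset U_i$; choosing a single nonempty open set $U$ inside $\bigcap f^{-?}(\dots)$ is not quite right, so instead the cleaner route is: apply transitivity repeatedly to find a nonempty open $W\subset U_1$ together with times so that $f^{t_i}(W)$ meets $U_i$; better still, the standard trick is to find one nonempty open $U^\ast$ and times $p_i$ with $f^{p_i}(U^\ast)\subset U_i$, and similarly one $V^\ast$ and $q_i$ with $f^{q_i}(V^\ast)\subset V_i$. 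Then $ka_i\in N_f(U_i,V_i)$ will be implied by $ka_i+p_i-q_i\in N_f(U^\ast,V^\ast)$ after arranging signs; absorbing the $p_i,q_i$ into the vector $\mathbf n$ is exactly what the ``for every $\mathbf n\in\mathbb Z_+^r$'' quantifier is designed for. One then sets $\mathbf n$ to be the appropriate shift and invokes $N_f(U^\ast,V^\ast)\in\mathcal F[\mathbf a]$ to get the common $k$.

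For the ``only if'' direction (assume $\mathbf a$-transitive, prove $N_f(U,V)\in\mathcal F[\mathbf a]$): fix nonempty open $U,V$ and $\mathbf n=(n_1,\dots,n_r)\in\mathbb Z_+^r$. Put $U_i=U$ and $V_i=f^{-n_i}(V)$ for $i=1,\dots,r$; these are nonempty open (nonempty because $f^{n_i}(V)$... no — $f^{-n_i}(V)$ is nonempty since $V$ is nonempty and $f$ is onto? not necessarily onto, so I should instead take $U_i=f^{-n_i}(U)$ — still possibly empty). The correct choice: use surjectivity-free reasoning by putting $U_i = U$, $V_i=V$ and shifting via the vector — actually the honest fix is to note that for the forward direction we may assume the $n_i$ are realized by first shrinking: pick a nonempty open $U_0\subset U$ with $f^{N}(U_0)$ nonempty open for $N=\max n_i$ (possible since iterates of open sets under a transitive map... hmm, $f^N(U_0)$ is just nonempty, not open). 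I will therefore route the only-if direction through the product system directly: $\mathbf a$-transitivity gives, for the open sets $U^{(i)}:=U$ and $V^{(i)}:=V$ together with an auxiliary use of transitivity to absorb $\mathbf n$, a $k$ with $f^{ka_i}(U)\cap V\ne\emptyset$ and then push the $n_i$ in by one more application of transitivity of $(X,f)$ inside each coordinate before taking the product. The main obstacle, and the step I expect to require the most care, is precisely this bookkeeping of the additive shifts $\mathbf n$ — ensuring the $n_i$ can be inserted into the hitting times of a single pair $(U,V)$ using only transitivity of $(X,f)$ (not surjectivity), which is handled by choosing nonempty open refinements $U'\subset U$ with $f^{n_i}(U')\subset$ some target — and then confirming the two containments between $N_f(U,V)$ and the product hitting sets go through cleanly in both directions. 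Everything else is unwinding definitions.
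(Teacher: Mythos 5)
Your sufficiency direction ($\mathcal{F}[\mathbf{a}]$-transitive implies $\mathbf{a}$-transitive) is essentially the paper's argument: the sets $U^\ast$, $V^\ast$ with $f^{p_i}(U^\ast)\subset U_i$ and $f^{q_i}(V^\ast)\subset V_i$ are produced by the standard iterated-transitivity intersection $U^\ast=U_1\cap f^{-\ell_1}(U_2)\cap f^{-(\ell_1+\ell_2)}(U_3)\cap\dotsb$, and the sign problem you flag is resolved exactly as you suggest, by choosing the return times for the $U$'s larger than those for the $V$'s so that $p_i-q_i\ge 0$ and can be fed into the quantifier over $\mathbf{n}\in\mathbb{Z}_+^r$. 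That half is fine.

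The necessity direction is where you have a genuine gap. The correct move is the one you first wrote down and then abandoned: apply transitivity of $(X^r,f^{(\mathbf{a})})$ to the pair $U\times\dotsb\times U$ and $f^{-n_1}(V)\times\dotsb\times f^{-n_r}(V)$, which immediately yields $k$ with $a_ik+n_i\in N_f(U,V)$ for every $i$. Your objection that $f^{-n_i}(V)$ might be empty is unfounded: a transitive map of a compact metric space is surjective (if $f(X)\neq X$, then $X\setminus f(X)$ is a non-empty open set, $f(X)$ being compact, which no $f^n(W)$ with $n\ge 1$ can meet), and $\mathbf{a}$-transitivity forces $(X,f^{a_1})$, hence $(X,f)$, to be transitive. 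The fallback you propose instead does not work: plain transitivity of $(X,f)$ only supplies \emph{some} hitting time between two open sets, never the prescribed shifts $n_1,\dotsc,n_r$, so there is no way to ``absorb $\mathbf{n}$'' by an auxiliary transitivity application; and obtaining $k$ with $f^{ka_i}(U)\cap V\neq\emptyset$ first and then trying to ``push the $n_i$ in'' afterwards fails because the single $k$ must be chosen \emph{after} the shifts are already encoded in the target open sets of the product system. As written, your necessity half is not a proof; reinstating the choice $V_i=f^{-n_i}(V)$ together with the surjectivity remark closes it.
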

 \begin{proof}
Necessity. Assume that $(X,f)$ is $\mathbf{a}$-transitive.
Let $U$ and $V$ be two non-empty open subsets of $X$.
We want to show that  $N(U,V)\in \mathcal{F}[\mathbf{a}]$.
Let $n_1, n_2,\dotsc, n_{r}\in\mathbb{Z}_+$.
By the transitivity of $(X^r,f^{(\mathbf{a})})$, there exists $k\in\mathbb{N}$ such that
\[k\in N_{f^{(\mathbf{a})}} (U\times U\times\dotsb\times U,
f^{-n_1}(V)\times f^{-n_2}(V)\times \dotsb \times f^{-n_r}(V)).\]
Then $\{a_{1}k+n_1,a_{2}k+n_2, \dotsc, a_rk+n_{r}\}\subset N(U,V)$.
Hence $N(U,V)\in \mathcal{F}[\mathbf{a}]$.

Sufficiency. Assume that $(X,f)$ is $\mathcal{F}[\mathbf{a}]$-transitive.
Let $U_1,U_2,\dotsc,U_r$ and $V_1, V_2,\dotsc,V_r$ be non-empty open subsets of $X$.
Applying the transitivity of $(X,f)$ to the two non-empty open sets $V_{r-1}$ and $V_r$,
we pick an $\ell'_{r-1}\in\mathbb{N}$ such that
\[V_{r-1}\cap f^{-\ell'_{r-1}}(V_r)\neq\emptyset.\]
Now applying the transitivity of $(X,f)$ to the two non-empty open sets $V_{r-2}$ and $V_{r-1}\cap f^{-\ell'_{r-1}}(V_r)$,
we pick an $\ell'_{r-2}\in \mathbb{N}$ such that
\[V_{r-2}\cap f^{-l_{r-2}'}(V_{r-1}\cap f^{-\ell'_{r-1}}(V_r))\neq\emptyset.\]
After repeating this process $(r-1)$ times,
we obtain a sequence $\{\ell'_i\}_{i=1}^{r-1}$ of positive integers and
\begin{align*}
V':=&V_1\cap f^{-\ell'_{1}}\Bigl(V_{2}\cap f^{-\ell'_{2}}\bigl(\dotsb\cap(V_{r-1}\cap f^{-\ell'_{r-1}}(V_r))\bigr)\Bigr)\\
=& V_1\cap f^{-\ell'_{1}}(V_{2})\cap f^{-(\ell'_{1}+\ell'_{2})}(V_{3})\cap\dotsb\cap
 f^{-(\ell'_{1}+\ell'_{2}+\dotsb+\ell'_{r-1})}(V_r)\neq\emptyset.
\end{align*}
Similarly, we can also obtain a sequence $\{\ell_i\}_{i=1}^{r-1}$  with $\ell_i>\ell_i'$ for $i=1,2,\dotsc,r-1$ and
\begin{align*}
U':=&U_1\cap f^{-\ell_{1}}\Bigl(U_{2}\cap f^{-\ell_{2}}\bigl(\dotsb\cap(U_{r-1}\cap f^{-\ell_{r-1}}(U_r))\bigr)\Bigr)\\
=& U_1\cap f^{-\ell_{1}}(U_{2})\cap f^{-(\ell_{1}+\ell_{2})}(U_{3})\cap\dotsb\cap
 f^{-(\ell_{1}+\ell_{2}+\dotsb+\ell_{r-1})}(U_r)\neq\emptyset.
\end{align*}
By the assumption, we have $N_f(U',V')\in\mathcal{F}[\mathbf{a}]$.
Then for the vector $(0, \ell_1-\ell_1', \ell_1+\ell_2-\ell_1'-\ell_2',\dotsc,
\ell_1+\ell_2+\dotsb+\ell_{r-1}-\ell_1'-\ell_2'-\dotsb-\ell_{r-1}')$,
there exists $k\in \mathbb{N}$ such that
\[\{a_1k,a_2k+\ell_1-\ell_1',\dotsc,a_rk+\ell_1+\ell_2+\dotsb+\ell_{r-1}-\ell_1'-\ell_2'-\dotsb-\ell_{r-1}'\}
\subset N_f(U', V').\]
Then
\begin{align*}
  &U_1\cap (f^{a_1})^{-k} (V_1)\neq\emptyset,\\
  &f^{-\ell_1}(U_2)\cap (f^{a_2})^{-k}(f^{-\ell_1}(V_2))\neq\emptyset,\\
  &\dotsc \\
  &f^{-(\ell_1+\ell_2+\dotsb+\ell_{r-1})}(U_r)\cap
  (f^{a_r})^{-k}(f^{-(\ell_1+\ell_2+\dotsb+\ell_{r-1})}(V_r))\neq\emptyset,
\end{align*}
which imply that $k\in N_{f^{(\mathbf{a})}}(U_1\times U_2\times\dotsb\times U_r,
V_1\times V_2\times\dotsb \times V_r)$.
Hence $(X,f)$ is $\mathbf{a}$-transitive.
\end{proof}

Let $\mathcal{F}[\infty]=\bigcap\limits_{r=1}^\infty \mathcal{F}[\mathbf{a}_r]$,
where $\mathbf{a}_r=(1,2,\dotsc,r)$  for $r\in\mathbb{N}$.
It is easy to see that a subset $F$ of $\mathbb{N}$ is in $\mathcal{F}[\infty]$ if and only if
for every $r\in\mathbb{N}$ and every $\mathbf{n}\in \mathbb{Z}_+^r$, there exists $k\in\mathbb{N}$
such that $k\mathbf{a}_r+\mathbf{n}\in F^r$.
We have the following characterizations of multi-transitivity and strong  multi-transitivity.

\begin{thm}\label{thm:multi-trans-equ-condi-infty}
Let $(X,f)$ be a dynamical system. Then
\begin{enumerate}
  \item $(X,f)$ is multi-transitive if and only if it is $\mathcal{F}[\infty]$-transitive;
  \item $(X,f)$ is strongly multi-transitive if and only if it is $\mathcal{F}[\infty]$-mixing.
\end{enumerate}
\end{thm}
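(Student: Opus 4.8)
The plan is to obtain both statements by intersecting, over all $r\in\mathbb{N}$, the vector-by-vector characterization already established in Theorem~\ref{thm:trans-vetor-equ-condi}, and then feeding the result into Proposition~\ref{prop:s-mul-tran-eq} for the ``strong'' version. I would first record the bookkeeping remark that $\mathcal{F}[\infty]=\bigcap_{r\ge1}\mathcal{F}[\mathbf{a}_r]$ is again a Furstenberg family, since an arbitrary intersection of hereditary-upward collections is hereditary upward (and it is proper, as $\mathbb{N}\in\mathcal{F}[\infty]$ while $\emptyset\notin\mathcal{F}[\infty]$); this legitimizes speaking of $\mathcal{F}[\infty]$-transitivity and $\mathcal{F}[\infty]$-mixing and, in particular, the use of the family-based results of Section~2.

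For part~(1) I would run the following chain of equivalences, valid in both directions. The system $(X,f)$ is $\mathcal{F}[\infty]$-transitive iff $N(U,V)\in\mathcal{F}[\infty]$ for every pair $U,V$ of non-empty open subsets of $X$; since membership in an intersection means membership in every member, and since $N(U,V)$ depends only on $U,V$ and not on $r$, this holds iff for every $r\in\mathbb{N}$ and every such pair $U,V$ one has $N(U,V)\in\mathcal{F}[\mathbf{a}_r]$, equivalently iff $(X,f)$ is $\mathcal{F}[\mathbf{a}_r]$-transitive for every $r$. By Theorem~\ref{thm:trans-vetor-equ-condi} applied with $\mathbf{a}=\mathbf{a}_r=(1,2,\dotsc,r)$, this is equivalent to $(X,f)$ being $\mathbf{a}_r$-transitive for every $r$, which is precisely the definition of multi-transitivity.

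For part~(2) I would combine (1) with the two cited structural results. Because $\mathcal{F}[\infty]$ is a family, Lemma~2.1(3) gives that $(X,f)$ is $\mathcal{F}[\infty]$-mixing iff it is $\mathcal{F}[\infty]$-transitive and weakly mixing; by part~(1) the first clause is equivalent to multi-transitivity, so being $\mathcal{F}[\infty]$-mixing is equivalent to being multi-transitive and weakly mixing. Finally, the equivalence (1)$\Leftrightarrow$(5) of Proposition~\ref{prop:s-mul-tran-eq} identifies ``multi-transitive and weakly mixing'' with ``strongly multi-transitive'', which completes the argument.

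I do not anticipate a genuine obstacle: the substance lives in Theorem~\ref{thm:trans-vetor-equ-condi} and Proposition~\ref{prop:s-mul-tran-eq}, and the present theorem merely packages them. The only points worth a line of care are the (harmless) interchange of the two universal quantifiers, over open pairs and over $r$, in part~(1) — together with the remark that $N(U,V)$ lands in the intersection precisely because it is a single set lying in each $\mathcal{F}[\mathbf{a}_r]$ — and the preliminary verification that $\mathcal{F}[\infty]$ is a family, which is what makes Lemma~2.1(3) applicable in part~(2).
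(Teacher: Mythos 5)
Your proposal is correct. Part (1) is essentially the paper's own argument: intersect the equivalences of Theorem~\ref{thm:trans-vetor-equ-condi} over all $r$, using that $N(U,V)$ is a single set lying in each $\mathcal{F}[\mathbf{a}_r]$ exactly when it lies in $\mathcal{F}[\infty]$. For part (2) you take a mildly different route from the paper. The paper never invokes Lemma~2.1(3); instead it unwinds ``$\mathcal{F}[\infty]$-mixing'' as ``$(X^2,f\times f)$ is $\mathbf{a}_r$-transitive for every $r$,'' identifies this (by rearranging coordinates) with ``$(X^r,f^{(\mathbf{a}_r)})$ is weakly mixing for every $r$,'' and then applies the equivalence (1)$\Leftrightarrow$(4) of Proposition~\ref{prop:s-mul-tran-eq}. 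You instead apply Lemma~2.1(3) to the family $\mathcal{F}[\infty]$ to split ``$\mathcal{F}[\infty]$-mixing'' into ``$\mathcal{F}[\infty]$-transitive and weakly mixing,'' feed the first clause into part (1), and finish with (1)$\Leftrightarrow$(5) of Proposition~\ref{prop:s-mul-tran-eq}. Both routes ultimately rest on Proposition~\ref{prop:s-mul-tran-eq}; yours delegates the weak-mixing bookkeeping to the general family lemma (which does require, as you rightly check, that $\mathcal{F}[\infty]$ is a proper Furstenberg family), while the paper's is self-contained modulo a coordinate shuffle. Your preliminary verification that $\mathcal{F}[\infty]$ is hereditary upward and proper is a worthwhile addition that the paper leaves implicit.
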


\begin{proof}
For every $r\in\mathbb{N}$, put $\mathbf{a}_r=(1,2,\dotsc,r)\in \mathbb{N}^r$.

(1) Necessity. By the definition of multi-transitivity,
we have that $(X,f)$ is $\mathbf{a}_r$-transitive for all $r\in\mathbb{N}$.
Now by Theorem~\ref{thm:trans-vetor-equ-condi},
$(X,f)$ is $\mathcal{F}[\mathbf{a}_r]$-transitive for all $r\in\mathbb{N}$.
Then for every two non-empty open subsets $U,V$ of $X$,
$N(U,V)\in \mathcal{F}[\mathbf{a}_r]$ for all $r\in\mathbb{N}$.
Therefore $N(U,V)\in \mathcal{F}[\infty]$,
which implies that $(X,f)$ is $\mathcal{F}[\infty]$-transitive.

Sufficiency. Since $(X,f)$ is $\mathcal{F}[\infty]$-transitive,
$(X,f)$ is $\mathcal{F}[\mathbf{a}_r]$-transitive for all $r\in\mathbb{N}$.
By Theorem~\ref{thm:trans-vetor-equ-condi},
$(X,f)$ is $\mathbf{a}_r$-transitive for all $r\in\mathbb{N}$.
Hence $(X,f)$ is multi-transitive.

(2) Necessity. Since $(X,f)$ is strongly multi-transitive,
by Proposition~\ref{prop:s-mul-tran-eq} we have that $(X^r,f^{(\mathbf{a}_r)})$
is weakly mixing for every $i\in\mathbb{N}$.
Then $(X^2,f\times f)$ is $\mathbf{a}_r$-transitive for every $r\in\mathbb{N}$.
Thus $(X^2,f\times f)$ is $\mathcal{F}[\infty]$-transitive,
that is $(X,f)$ is $\mathcal{F}[\infty]$-mixing.

Sufficiency. Assume that $(X,f)$ is $\mathcal{F}[\infty]$-mixing.
Then $(X^2,f\times f)$ is $\mathcal{F}[\infty]$-transitive.
So $(X^2,f\times f)$ is $\mathbf{a}_r$-transitive for every $r\in\mathbb{N}$,
that is $(X^r,f^{(\mathbf{a}_r)})$ is weakly mixing for every $r\in\mathbb{N}$.
Now by Proposition~\ref{prop:s-mul-tran-eq},
we have $(X,f)$ is strongly multi-transitive.
\end{proof}

\section{Multi-transitivity with respect to a sequence}
In this section, we introduce the notion of multi-transitivity with respect to a sequence $A$
and show that multi-transitivity with respect to a difference set implies strong scattering.

\begin{de}
Let $(X, f)$ be a dynamical system and $A=(a_i)_{i=1}^\infty$ be a sequence in $\mathbb{N}$.
We say that $(X,f)$ is \emph{multi-transitive with respect to the sequence $A$} (or briefly \emph{$A$-transitive}),
if the product system $(X^\infty, f^{(A)})$ is transitive,
where  $X^\infty$ is endowed with the Tikhonov topology
and $f^{(A)}=f^{a_{1}}\times f^{a_{2}}\times \dotsb \times f^{a_{i}}\times\dotsb$.

It is not hard to verify that $(X,f)$ is $A$-transitive if and only if for every $i\in\mathbb{N}$,
$(X,f)$ is $\mathbf{a}_i$-transitive, where $\mathbf{a}_i=(a_1,a_2,\dotsc,a_i)$.
\end{de}

For a sequence $A=(a_i)_{i=1}^\infty$ in $\mathbb{N}$,
let $\mathcal{F}[A]=\bigcap_{i=1}^\infty \mathcal{F}[\mathbf{a}_i]$, where $\mathbf{a}_i=(a_1,a_2,\dotsc,a_i)$.
Similarly to the proof of Theorem~\ref{thm:multi-trans-equ-condi-infty},
we have the following characterization of multi-transitivity with respect to a sequence.

\begin{prop}
Let $(X,f)$ be a dynamical system and $A=(a_i)_{i=1}^\infty$ be a sequence in $\mathbb{N}$.
Then $(X,f)$ is $A$-transitive if and only if it is $\mathcal{F}[A]$-transitive.
\end{prop}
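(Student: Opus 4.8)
The plan is to follow the template already established by Theorem~\ref{thm:multi-trans-equ-condi-infty}(1), since the statement for a sequence $A$ is essentially the ``limit over $i$'' of Theorem~\ref{thm:trans-vetor-equ-condi} applied to each finite initial segment $\mathbf{a}_i = (a_1,\dotsc,a_i)$. The two ingredients are: the observation recorded just after the definition that $(X,f)$ is $A$-transitive if and only if $(X,f)$ is $\mathbf{a}_i$-transitive for every $i\in\mathbb{N}$; and the definition $\mathcal{F}[A] = \bigcap_{i=1}^\infty \mathcal{F}[\mathbf{a}_i]$, which says that a set $F\subset\mathbb{N}$ lies in $\mathcal{F}[A]$ precisely when $F\in\mathcal{F}[\mathbf{a}_i]$ for every $i$.

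For necessity, I would assume $(X,f)$ is $A$-transitive. By the cited observation, $(X,f)$ is $\mathbf{a}_i$-transitive for every $i\in\mathbb{N}$, so by Theorem~\ref{thm:trans-vetor-equ-condi} it is $\mathcal{F}[\mathbf{a}_i]$-transitive for every $i$. Hence for any two non-empty open subsets $U,V$ of $X$ we have $N(U,V)\in\mathcal{F}[\mathbf{a}_i]$ for all $i$, and therefore $N(U,V)\in\bigcap_{i=1}^\infty\mathcal{F}[\mathbf{a}_i] = \mathcal{F}[A]$. Thus $(X,f)$ is $\mathcal{F}[A]$-transitive. For sufficiency, I would reverse this: if $(X,f)$ is $\mathcal{F}[A]$-transitive, then since $\mathcal{F}[A]\subset\mathcal{F}[\mathbf{a}_i]$ for each $i$, the system is $\mathcal{F}[\mathbf{a}_i]$-transitive for every $i$, hence $\mathbf{a}_i$-transitive for every $i$ by Theorem~\ref{thm:trans-vetor-equ-condi}, hence $A$-transitive by the observation again.

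There is genuinely no obstacle here; the proposition is a routine reindexing of Theorem~\ref{thm:multi-trans-equ-condi-infty}(1) with the arithmetic progression vectors $(1,2,\dotsc,r)$ replaced by the initial segments $(a_1,\dotsc,a_i)$ of the given sequence, and the paper itself flags this by saying ``Similarly to the proof of Theorem~\ref{thm:multi-trans-equ-condi-infty}.'' The only point deserving a word of care is the equivalence ``$A$-transitive $\iff$ $\mathbf{a}_i$-transitive for all $i$,'' which rests on the fact that transitivity of an infinite product in the Tikhonov topology is equivalent to transitivity of all finite-coordinate projections — standard because basic open sets in $X^\infty$ depend on only finitely many coordinates, and the coordinate projection $(X^\infty,f^{(A)})\to(X^i,f^{(\mathbf{a}_i)})$ is a factor map. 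Since that equivalence is already asserted in the excerpt, the write-up can simply invoke it. Accordingly I would keep the proof to three or four sentences, mirroring the structure of the necessity direction of Theorem~\ref{thm:multi-trans-equ-condi-infty}(1) essentially verbatim.
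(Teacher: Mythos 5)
Your proof is correct and is exactly the argument the paper intends: the paper omits the proof, noting only that it goes ``similarly to the proof of Theorem~\ref{thm:multi-trans-equ-condi-infty},'' and your write-up is precisely that argument with $(1,2,\dotsc,r)$ replaced by the initial segments $(a_1,\dotsc,a_i)$, using Theorem~\ref{thm:trans-vetor-equ-condi} for each $i$ and the definition $\mathcal{F}[A]=\bigcap_{i=1}^\infty\mathcal{F}[\mathbf{a}_i]$.
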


Recall that a subset $A$ of $\mathbb{N}$ is a \emph{difference set} if
there exists an infinite subset $B$ of $\mathbb{N}$
such that $B-B:=\{a-b: a,b\in B $ and $a>b\}\subset A$, and a dynamical system is \emph{strongly scattering} if
it is weakly disjoint from every E-system.

\begin{thm}\label{thm:diff-set-tran-impl-s-scatter}
Let $(X, f)$ be a dynamical system and $A$ be a difference set.
If $(X,f)$ is $A$-transitive, then $(X,f)$ is strongly scattering.
\end{thm}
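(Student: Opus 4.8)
The plan is to show that if $A$ is a difference set and $(X,f)$ is $A$-transitive, then $(X,f)$ is weakly disjoint from every E-system. So fix an E-system $(Y,g)$; it carries an invariant Borel probability measure $\mu$ with full support. We must show $(X\times Y, f\times g)$ is transitive, i.e.\ for any non-empty open $U_1,U_2\subset X$ and $V_1,V_2\subset Y$ we have $N_f(U_1,U_2)\cap N_g(V_1,V_2)\neq\emptyset$. The key external input about E-systems is that their hitting time sets $N_g(V_1,V_2)$ are \emph{thickly syndetic}, or at least syndetic with positive density; the sharp tool here is that for an E-system, $N_g(V_1,V_2)$ is a $\Delta^*$-set relative to a suitable collection — more precisely, by a Poincar\'e recurrence / Birkhoff argument, $N_g(V,V)$ (and hence $N_g(V_1,V_2)$ after shifting along the orbit) contains $B-B$ for some infinite $B$, up to a bounded shift. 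Actually the clean statement I want is: for an E-system, $N_g(V_1,V_2)$ intersects every difference set. This is because $N_g(V,V)\supset B-B$ where $B=\{n: g^n z\in V\}$ for a $\mu$-generic point $z$ with $\mu(V)>0$, so $N_g(V,V)$ is a "$\Delta$-set", and in fact meets every difference set by the standard combinatorial fact that the intersection of two difference sets is non-empty (two infinite sets $B_1,B_2\subset\mathbb N$ force $(B_1-B_1)\cap(B_2-B_2)\neq\emptyset$, since $B_1$ contains two elements congruent mod some difference of $B_2$ — more carefully, pigeonhole on the pairs).

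So the main steps, in order. First I would record the combinatorial lemma: the family of difference sets has the property that any difference set meets any other difference set (equivalently, difference sets form a "Ramsey-like" family closed under the relevant intersection). The proof: given infinite $B_1$ and an arbitrary difference set $A'\supset B_2-B_2$, choose $b<b'$ in $B_1$ and note $b'-b$ lies in $A'$ provided we can arrange $b,b'$ to be among $B_2$ — but since $A'$ is merely $\supset B_2-B_2$ we instead argue: pick $|B_1|$ large, by pigeonhole two elements of $B_1$ fall in the same residue class mod $d$ for each $d\in B_2-B_2$; sharpen this to get a genuine common difference. (The precise form is classical — e.g.\ Furstenberg — and I would cite it.) Second, I would use that $(Y,g)$ is an E-system to show $N_g(V_1,V_2)$ contains a translate of a difference set: fix $z\in\mathrm{supp}(\mu)$ generic for $\mu$ (or just use that the return-time set to an open set of positive measure is syndetic and, more, IP$^*$-related), so that $\{n: g^n z\in V_2\}$ is infinite; combined with openness of $V_1$ and $g^{-j}(V_1)\cap V_1\neq\emptyset$ along the orbit of $z$, obtain an infinite $B$ with $B - B + j_0 \subset N_g(V_1,V_2)$ for some fixed $j_0$. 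Third, I would use $A$-transitivity of $(X,f)$: by the definition of $A$-transitivity and Theorem~\ref{thm:trans-vetor-equ-condi}, $N_f(U_1,U_2)\in\mathcal F[\mathbf a_i]$ for every $i$, and I will extract from this that $N_f(U_1,U_2)$ itself contains a translate of $A$, hence (since $A$ is a difference set) contains a translate of a difference set — indeed $A$-transitivity with $A\supset B'-B'$ lets us find, for any prescribed shift vector, a common $k$ making $k\mathbf a_i + \mathbf n$ land in $N_f(U_1,U_2)^i$, which after unwinding says $N_f(U_1,U_2)$ hits arbitrarily long "$A$-patterns". Finally, I would combine: $N_f(U_1,U_2)$ contains an $A$-pattern and $N_g(V_1,V_2)$ (shifted) contains a $B_2-B_2$ pattern; since $A\supset B_1-B_1$, the combinatorial lemma forces a common point, after absorbing the bounded shift $j_0$ into the choice of $\mathbf n$ in the definition of $\mathcal F[\mathbf a_i]$. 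This yields $N_f(U_1,U_2)\cap N_g(V_1,V_2)\neq\emptyset$, so $(X\times Y,f\times g)$ is transitive.

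The main obstacle I expect is the bookkeeping that ties $A$-transitivity to "$N_f(U_1,U_2)$ contains a shifted copy of the difference set $A$." The family $\mathcal F[\mathbf a_i]$ is defined in terms of arithmetic-progression-like patterns $k\mathbf a + \mathbf n$, whereas difference sets are about pairs from an infinite $B$; reconciling these requires choosing the auxiliary sets $U_1',U_2'$ inside $U_1,U_2$ along finitely many iterates (exactly as in the sufficiency argument of Theorem~\ref{thm:trans-vetor-equ-condi}) so that a single $k$ simultaneously realizes all return times $a_jk + n_j$. The delicate point is to do this uniformly enough that, as $i\to\infty$ and the pattern lengths grow, we capture a genuine $B-B$ structure rather than just bounded patterns; this is where the hypothesis that $A$ is a \emph{difference set} (not merely thick or syndetic) is essential, and where I would lean on the combinatorial lemma from step one together with a compactness/diagonal argument over $i$. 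The interplay with the fixed shift $j_0$ coming from the E-system side is a secondary nuisance but is handled by the freedom to choose $\mathbf n$.
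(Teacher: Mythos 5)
Your proposal has a genuine gap, and it sits exactly where you place all the weight: the ``standard combinatorial fact'' that any two difference sets intersect is false. If $B_1=\{4^n:n\in\mathbb N\}$, then $B_1-B_1$ has density zero, so one can greedily build an infinite $B_2=\{c_1<c_2<\dotsb\}$ (choosing $c_{k+1}$ outside the union $\bigcup_{i\le k}(c_i+(B_1-B_1))$ of finitely many density-zero sets) with $(B_2-B_2)\cap(B_1-B_1)=\emptyset$. What is true is the asymmetric statement you half-invoke: for an E-system, $N_g(W,W)\supseteq\{n:\mu(W\cap g^{-n}W)>0\}$ meets \emph{every} difference set (it is a $\Delta^*$-set), by pigeonhole on the equal-measure sets $g^{-b_i}(W)$. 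But to exploit that you would then need $N_f(U,V)$ to \emph{contain} an infinite difference set $B'-B'$, and $A$-transitivity does not give you this: for each $m$ it produces some $k_m$ with $k_m\cdot(F_m-F_m)\subset N_f(U,V)$, where $F_m=\{b_1,\dots,b_m\}$, but the dilation factor $k_m$ changes with $m$, and there is no compactness or diagonal argument welding these finite dilated configurations into a single infinite $B'-B'$ (your own worry about ``capturing a genuine $B-B$ structure rather than just bounded patterns'' is exactly the unfixable point). So the two halves of your intersection argument do not meet.

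The theorem is nevertheless true, and the repair is to interleave the two sides finitarily rather than extracting infinite structure from each separately. Given non-empty open $U,V\subset X$ and $W\subset Y$ with $\mu(W)=\alpha>0$, first fix $n>1/\alpha$ and let $\{a_1,\dots,a_m\}=\{b_1,\dots,b_n\}-\{b_1,\dots,b_n\}\subset B-B\subset A$; $A$-transitivity (applied to the finite subvector with all coordinates equal to $U$, resp.\ $V$) yields a \emph{single} $k$ with $\{ka_1,\dots,ka_m\}\subset N_f(U,V)$. Only now run the recurrence argument on the $g$-side, and along the dilated times: the $n$ sets $g^{-kb_1}(W),\dots,g^{-kb_n}(W)$ all have measure $\alpha$ by invariance of $\mu$, and $n\alpha>1$, so two of them meet, giving $k(b_j-b_i)\in N_g(W,W)\cap\{ka_1,\dots,ka_m\}\subset N_g(W,W)\cap N_f(U,V)$. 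The dilation by $k$ costs nothing on the measure side, and the bound $n>1/\alpha$ tells you in advance how large a finite configuration to request from $A$-transitivity. The reduction from $N_g(W_1,W_2)$ to $N_g(W,W)$ with $W=W_1\cap g^{-k_0}(W_2)$ via a bounded shift, which you sketch, is then handled exactly as you expect.
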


\begin{proof}
By the definition of the difference set, there exists an infinite subset $B$ of $\mathbb{N}$ such that $B-B\subset A$.
We can arrange $B$ as an increasing sequence $\{b_i\}_{i=1}^\infty$.
Let $(Y, g)$ be an $E$-system.
Then there is an invariant Borel probability measure $\mu$ of $(Y,g)$ such that $supp(\mu)=Y$.
We first prove the following claim.

\textbf{Claim}: For every two non-empty open subsets
$U$, $V$ of $X$ and every non-empty open subset $W$ of $Y$, $N_f(U,V)\cap N_g(W,W)\neq\emptyset$.
\begin{proof}[Proof of the Claim]
Since $\mu$ is of full support, $\mu(W)=\alpha>0$.
Pick $n\in \mathbb{N}$ with $n>\frac{1}{\alpha}$,
and let $\{a_1,a_2,\dotsc, a_m\}=\{b_1,b_2,\dotsc,b_n\}-\{b_1,b_2,\dotsc,b_n\}\subset B-B\subset A$.
As $(X,f)$ is $A$-transitive,  there exists a $k\in\mathbb{N}$  such that
\[(U\times U\times\dotsb \times U)\cap
\bigl(f^{(\mathbf{a}_m)}\bigr)^{-k}(V\times V\times\dotsb\times V)\neq\emptyset.\]
Then $\{ka_1,ka_2,\dotsc,ka_m\}\subset N_f(U, V)$.

Consider the following $n$ subsets of $Y$:
\[g^{-kb_1}(W),  g^{-kb_2}(W),  \dotsc, g^{-kb_n}(W).\]
Notice that $\mu(g^{-kb_i}(W))=\mu(W)=\alpha$ for $i=1, 2, \dotsc, n$.
Since $\mu(Y)=1$, by the pigeonhole principle, there exist $1\leq i<j\leq n$  such that
$g^{-kb_i}(W)\cap g^{-kb_j}(W)\neq\emptyset$, i.e., $k(b_j-b_i)\in N_g(W, W)$.
Then $N_g(W, W)\cap N_f(U, V)\neq\emptyset$, since $k(b_j-b_i)\in \{ka_1,ka_2,\dotsc,ka_m\}$.
This ends the proof of the Claim.
\end{proof}
Now we show that $(X,f)$ is weakly disjoint from $(Y,g)$.
In other words, we want to show that $(X\times Y,f\times g)$ is transitive.
Let $U,V$ be two non-empty open subsets of $X$ and $W_1,W_2$ be two non-empty open subsets of $Y$.
By the transitivity of $(Y,g)$, there exists $k\in\mathbb{N}$ such that $W_1\cap g^{-k}(W_2)\neq\emptyset$.
Let $W=W_1\cap g^{-k}(W_2)$. It is easy to see that
\[k+N_f(U,f^{-k}(V))\cap N_g(W,W)\subset N_f(U,V)\cap N_g(W_1,W_2).\]
By the Claim, we have $N_f(U,f^{-k}(V))\cap N_g(W,W)\neq\emptyset$, and then
 $N_f(U,V)\cap N_g(W_1,W_2)\neq\emptyset$.
This implies that $(X\times Y,f\times g)$ is transitive.
\end{proof}

\section{Chaos behavior in multi-transitive systems}
Let $(X,f)$ be a dynamical system.
The definition of Li-Yorke chaos is based on ideas in~\cite{T-Li-J-Yorke-1975}.
A pair $(x, y)\in X\times X$ is called \emph{scrambled} if
\[\limsup_{n\to\infty}d(f^{n}(x), f^{n}(y))>0\quad\text{and}\quad
 \liminf_{n\rightarrow \infty}d(f^{n}(x), f^{n}(y))=0.\]
A subset $C\subset X$ is called \emph{scrambled}
if every non-diagonal pair $(x,y)$ in $C\times C$ is scrambled.
The system $(X,f)$ is called \emph{Li-Yorke chaotic}
if there exists an uncountable scrambled set in $X$.

For a positive real number $\delta>0$, a pair $(x, y)\in X\times X$ is called \emph{$\delta$-scrambled} if
\[\limsup_{n\to\infty}d(f^{n}(x), f^{n}(y))>\delta\quad\text{and}\quad
 \liminf_{n\rightarrow \infty}d(f^{n}(x), f^{n}(y))=0.\]
A subset $C\subset X$ is called \emph{$\delta$-scrambled}
if every non-diagonal pair $(x,y)$ in $C\times C$ is $\delta$-scrambled.
The system $(X,f)$ is called \emph{Li-Yorke $\delta$-chaotic}
if there exists an uncountable $\delta$-scrambled set in $X$.

\begin{rem}
It is shown in~\cite{W-Huang-X-Ye-2002} that scattering implies Li-Yorke chaos,
then by Theorem~\ref{thm:diff-set-tran-impl-s-scatter}
multi-transitivity implies Li-Yorke chaos.
But in this section we want to show a little bit more, that is in any multi-transitive system
there exists a dense Mycielski $\delta$-scrambled set.
\end{rem}

Recall that a subset of $X$ is called a \emph{Mycielski set} if it can be  expressed
as an union of countably many Cantor sets. For  convenience we restate here a
version of Mycielski's theorem (\cite[Theorem~1]{M64}) which we shall use.

\begin{thm}\label{thm:Mycielski-thm}
Let $X$ be a compact metric space without isolated points.
Suppose $R$ is a dense $G_\delta$ subset of $X\times X$.
Then there exists a dense  Mycielski subset $K$ of $X$ such that
for every two distinct points $x,y$ in $K$, the pair $(x,y)$ is in  $R$.
\end{thm}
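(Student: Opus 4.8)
The plan is to prove Theorem~\ref{thm:Mycielski-thm} by an explicit Cantor-scheme construction that produces countably many Cantor sets whose union is dense in $X$ and all of whose distinct pairs lie in $R$. Write $R=\bigcap_{m\ge1}R_m$ with each $R_m$ open and dense in $X\times X$ and $R_1\supseteq R_2\supseteq\cdots$ (the product $X\times X$ is again a perfect compact metric space). Fix a countable base $\{O_j\}_{j\ge1}$ of non-empty open subsets of $X$. The only feature of $X$ I shall use is that, since $X$ has no isolated points, every non-empty open set in $X$ contains two non-empty open subsets with disjoint closures.

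\emph{Step 1: the scheme.} I would construct, by induction on a stage parameter $n\ge1$, a finite family of non-empty open sets organized as $n$ finite binary trees. The $j$-th tree is ``born'' at stage $j$ as a single root $U_{j,\emptyset}\subseteq O_j$, and at stage $n\ge j$ it is a binary tree of depth $n-j$ whose nodes are non-empty open sets $U_{j,s}$ ($s$ a binary word of length $|s|\le n-j$), with $\overline{U_{j,s0}}$ and $\overline{U_{j,s1}}$ disjoint subsets of $U_{j,s}$ and $\operatorname{diam}U_{j,s}<2^{-|s|}$. Let $\mathcal L_n$ be the finite set of all deepest nodes (``leaves'') of all these trees; the key extra requirement is that any two distinct members $L,L'$ of $\mathcal L_n$ satisfy $\overline L\times\overline{L'}\subseteq R_n$. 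To pass from stage $n$ to stage $n+1$: adjoin a fresh root $U_{n+1,\emptyset}\subseteq O_{n+1}$; split every current leaf into two non-empty open children with disjoint closures (using that $X$ is perfect), obtaining a finite family $\mathcal L'$; then run through the finitely many ordered pairs of distinct members of $\mathcal L'$ one at a time, at each step using density of $R_{n+1}$ to replace the two sets in question by the sides of a non-empty open box contained in $R_{n+1}$; finally shrink each set so that its diameter is below $2^{-(n+1)}$ and its closure lies inside its parent. Because only finitely many pairs are treated and every operation passes to an open subset, at the end all stage-$(n+1)$ requirements hold simultaneously, in particular $\overline L\times\overline{L'}\subseteq R_{n+1}$ for all distinct $L,L'\in\mathcal L_{n+1}$.

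\emph{Steps 2 and 3: the Mycielski set and scrambling.} For each $j\ge1$ set $C_j=\bigcap_{n\ge j}\bigl(\bigcup\{\overline L:\ L\in\mathcal L_n,\ L\text{ a node of tree }j\}\bigr)$. The diameter bound forces each infinite branch of tree $j$ to converge to a single point, the disjointness of sibling closures makes distinct branches converge to distinct points, and no such point is isolated, so $C_j$ is homeomorphic to the Cantor set; also $C_j\subseteq O_j$ and $C_j\ne\emptyset$ by compactness. Hence $K:=\bigcup_{j\ge1}C_j$ is a Mycielski set, and it is dense because $K$ meets every basic open set $O_j$. For scrambling, take distinct $x,y\in K$, say $x\in C_i$ and $y\in C_j$. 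Whether or not $i=j$, the points $x$ and $y$ lie in distinct leaves at every sufficiently large stage $n$ (automatic if $i\ne j$; if $i=j$ their two branches of tree $i$ diverge at some finite stage); call these leaves $L_n\ni x$ and $L_n'\ni y$. Then $(x,y)\in\overline{L_n}\times\overline{L_n'}\subseteq R_n$ for all large $n$, hence — the $R_n$ being nested — $(x,y)\in R_n$ for every $n$, i.e.\ $(x,y)\in R$. This proves the theorem.

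I expect the only real difficulty to be the bookkeeping of a single inductive stage: arranging at once the binary-tree structure, the nesting of the new root inside $O_{n+1}$, the shrinking of diameters, and — most importantly — the finitely many cross-conditions against the dense open set $R_{n+1}$. Each of these is handled by an elementary refinement (perfectness of $X$ to split a leaf, density of $R_{n+1}$ to fit an open box inside it), and since at each stage only finitely many constraints are active and later refinements never undo earlier ones, they can all be met by processing the constraints in turn.
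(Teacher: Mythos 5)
Your proof is correct. Note, however, that the paper does not prove this statement at all: it is quoted as a known theorem and attributed to Mycielski (\cite[Theorem~1]{M64}), so there is no in-paper argument to compare against. What you have written is the standard direct proof of this version of Mycielski's theorem by a Cantor scheme (fusion) argument: countably many binary trees of open sets, one rooted in each basic open set to force density, with the finitely many cross-conditions against the nested dense open sets $R_n$ imposed one ordered pair at a time at each stage. All the essential points are in place: perfectness of $X$ is used exactly where needed (to split a leaf into two open subsets with disjoint closures), the ordered-pair bookkeeping gives both $(x,y)\in R$ and $(y,x)\in R$, and the nestedness of the $R_n$ upgrades ``$(x,y)\in R_n$ for all large $n$'' to membership in $R$. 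One small point worth making explicit: after you fit an open box $V\times V'$ inside $R_{n+1}\cap(L\times L')$ you need the \emph{closures} to satisfy $\overline{V}\times\overline{V'}\subseteq R_{n+1}$; this is indeed delivered by your final shrinking step (replacing each leaf by an open subset whose closure lies in the previous version of that leaf), but it deserves a sentence. With that clarification the argument is complete and self-contained, which is arguably a service the paper itself does not provide.
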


Recall that a dynamical system $(X, f)$ has \emph{sensitive dependence on initial conditions}
if there exists a $\delta>0$ such that for every  $\varepsilon>0$ and $x\in X$,
there is some $y\in X$ with $d(x,y)<\varepsilon$ and $n\in \mathbb{N}$ with
$d(f^{n}(x), f^{n}(y))>\delta$.

\begin{lem}[\cite{Moothathu-2010}]\label{lem:(1,2)-tran-impl-sensitive}
Let $(X, f)$ be a dynamical system with $X$ being infinite.
If $(X,f)$ is multi-transitive with respect to $(1,2)$,
then  $(X,f)$ has sensitive dependence on initial conditions.
\end{lem}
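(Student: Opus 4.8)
The plan is to argue by contradiction: suppose $(X,f)$ is \emph{not} sensitive; I will show this forces $X$ to be a single point, contradicting the hypothesis that $X$ is infinite. First I would record that $(X,f)$ is transitive, since the first-coordinate projection $X^2\to X$ is a factor map from the transitive system $(X^2,f\times f^2)$ onto $(X,f)$ and factors of transitive systems are transitive. Next I would extract an equicontinuity point of $(X,f)$: from the negation of sensitivity, for each $k\in\mathbb{N}$ there are $x_k\in X$ and $\varepsilon_k>0$ with $\mathrm{diam}\,f^n(B(x_k,\varepsilon_k))\le 2/k$ for all $n\ge 0$; by transitivity each set $\bigcup_{m\ge 0}f^{-m}(B(x_k,\varepsilon_k))$ is dense and open, so a Baire-category intersection over $k$ produces a point $x^*$ which, after adjusting for the finitely many initial iterates $f^0,\dots,f^{m-1}$ by continuity, is an equicontinuity point of $(X,f)$. (Equivalently one simply quotes the classical Auslander--Yorke / Glasner--Weiss dichotomy: a non-sensitive transitive system is almost equicontinuous.) Fix such an $x^*$.

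The heart of the argument is the observation that $(x^*,x^*)$ is then a transitive point of $(X^2,f\times f^2)$. Indeed, fix $(p,q)\in X^2$ and $\gamma>0$, and choose $\eta\in(0,\gamma/2]$ from equicontinuity at $x^*$, so that $d(x^*,z)<\eta$ implies $d(f^m x^*,f^m z)<\gamma/2$ for every $m\ge 0$. Applying transitivity of $(X^2,f\times f^2)$ to the open sets $B(x^*,\eta)\times B(x^*,\eta)$ and $B(p,\gamma/2)\times B(q,\gamma/2)$ yields $n\ge 1$ and points $z_1,z_2\in B(x^*,\eta)$ with $f^n(z_1)\in B(p,\gamma/2)$ and $f^{2n}(z_2)\in B(q,\gamma/2)$. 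Then $d(f^n x^*,p)\le d(f^n x^*,f^n z_1)+d(f^n z_1,p)<\gamma$, and, since the equicontinuity estimate at $x^*$ is valid for the iterate $f^{2n}$ as well, $d(f^{2n}x^*,q)<\gamma$. As $(p,q)$ and $\gamma$ were arbitrary, the orbit $\{(f^n x^*,f^{2n}x^*):n\ge 0\}$ is dense in $X^2$.

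Now I would close the loop. Since $X$ is infinite, pick $r\in X$ with $2s:=d(r,x^*)>0$, and choose $\eta\in(0,s/3]$ with $d(x^*,z)<\eta\Rightarrow d(f^m x^*,f^m z)\le s/3$ for all $m\ge 0$. By the previous paragraph there is $n\ge 1$ with $d(f^n x^*,x^*)<\eta$ and $d(f^{2n}x^*,r)<s$. From $d(x^*,f^n x^*)<\eta$, equicontinuity at $x^*$ applied to $z=f^n x^*$ and the iterate $f^n$ gives $d(f^n x^*,f^{2n}x^*)=d\bigl(f^n x^*,f^n(f^n x^*)\bigr)\le s/3$, hence $d(x^*,f^{2n}x^*)\le d(x^*,f^n x^*)+d(f^n x^*,f^{2n}x^*)<\eta+s/3\le 2s/3$. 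On the other hand $d(x^*,f^{2n}x^*)\ge d(x^*,r)-d(r,f^{2n}x^*)>2s-s=s$, and $s>2s/3$ is absurd. Therefore $(X,f)$ must be sensitive.

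I expect the middle step to be the only one requiring a genuine idea: recognizing that equicontinuity at a \emph{single} point, fed into $(1,2)$-transitivity of the product, already forces the diagonal point $(x^*,x^*)$ to have dense orbit under $f\times f^2$. Once that is in hand, the identity $f^{2n}x^*=f^n(f^n x^*)$ combined with the equicontinuity estimate finishes things mechanically, while the production of the equicontinuity point $x^*$ in the first step is routine (or may simply be cited).
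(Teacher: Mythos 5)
Your argument is correct. Note first that the paper itself gives no proof of this lemma --- it is imported verbatim from Moothathu's paper \cite{Moothathu-2010} --- so there is no ``paper proof'' to match; what you have written is a valid self-contained substitute. Each step checks out: transitivity of $(X,f)$ as a factor of $(X^2,f\times f^2)$; the Baire-category extraction of an equicontinuity point $x^*$ from the negation of sensitivity (with the harmless normalization $\varepsilon_k\le 1/k$ so that the $n=0$ diameter bound also holds); the observation that one-point equicontinuity upgrades $(1,2)$-transitivity to density of $\{(f^nx^*,f^{2n}x^*):n\ge 1\}$ in $X^2$; and the final contradiction via $f^{2n}x^*=f^n(f^nx^*)$. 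You correctly identified the crux: $f^{2n}$ factors through $f^n$, so if the $n$-th iterate returns near $x^*$ then equicontinuity forbids the $2n$-th iterate from being far from $x^*$, while transitivity of $f\times f^2$ forces exactly that. I would only point out that the detour through Baire category and the diagonal transitive point can be shortcut: the negation of sensitivity already hands you a single open ball $U$ with $\operatorname{diam}f^n(U)\le\delta$ for all $n\ge 0$, and applying $(1,2)$-transitivity to $U\times U$ and $U\times V$ (with $V$ far from $U$) gives $z_1,z_2\in U$ with $f^nz_1\in U$ and $f^{2n}z_2\in V$; since $f^{2n}z_1=f^n(f^nz_1)\in f^n(U)$ and $f^{2n}z_1,f^{2n}z_2\in f^{2n}(U)$, the three sets $U$, $f^n(U)$, $f^{2n}(U)$ pairwise nearly meet and all have diameter at most $\delta$, which pins $V$ within $3\delta+\operatorname{diam}(U)$ of $U$ and yields the contradiction directly. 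That version is closer to Moothathu's original argument and avoids the equicontinuity-point machinery, but your route is equally rigorous.
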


\begin{thm}\label{thm:mul-tran-impl-deta-Mycie-scram}
Let $(X, f)$ be a dynamical system with $X$ having at least two points.
If $(X,f)$ is multi-transitive, then there exists a dense Mycielski $\delta$-scrambled set for some $\delta>0$.
\end{thm}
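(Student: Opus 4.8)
The plan is to exhibit a dense $G_{\delta}$ subset $R$ of $X\times X$ all of whose pairs are $\delta$-scrambled for a suitable $\delta>0$, and then to apply Mycielski's theorem (Theorem~\ref{thm:Mycielski-thm}). First note that $X$ is infinite: a transitive system on a finite metric space is a single periodic cycle, and a cycle of length at least two is not multi-transitive with respect to $(1,2)$. Since $(X,f)$ is in particular multi-transitive with respect to $(1,2)$ and $X$ is infinite, Lemma~\ref{lem:(1,2)-tran-impl-sensitive} shows that $(X,f)$ has sensitive dependence on initial conditions; fix a sensitivity constant $c>0$ and set $\delta=c/5$. A sensitive system has no isolated points, so $X$ is a perfect compact metric space and Theorem~\ref{thm:Mycielski-thm} applies.

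Put $D=\bigcap_{N\in\mathbb{N}}\bigcup_{n\geq N}\{(x,y):d(f^{n}x,f^{n}y)>2\delta\}$, a $G_{\delta}$ set, every pair of which has $\limsup_{n\to\infty}d(f^{n}x,f^{n}y)\geq 2\delta$. I would first check, using only sensitivity, that for every non-empty open $A\subseteq X$ there are infinitely many $n$ with $\mathrm{diam}\,\overline{f^{n}(A)}\geq c$: otherwise, letting $M$ be the largest such $n$, for a small ball $B(x,\eta)\subseteq A$ sensitivity at $x$ produces, for each small $\eta'$, a time $m\leq M$ with $\mathrm{diam}\,\overline{f^{m}(B(x,\eta'))}>c$; since only finitely many $m$ are $\leq M$, one of them serves for a sequence $\eta'\downarrow 0$, contradicting continuity of that iterate. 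Granting this, fix non-empty open $U_{1},U_{2}$ and $N$, and pick $n\geq N$ with $\mathrm{diam}\,\overline{f^{n}(U_{1})}\geq c=5\delta$; then $\overline{f^{n}(U_{1})}$ is not contained in any ball of radius $2\delta$, so for any $v\in U_{2}$ some $u\in U_{1}$ has $d(f^{n}u,f^{n}v)>2\delta$. Hence $D$ is dense.

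Put $L=\bigcap_{k,N\in\mathbb{N}}\bigcup_{n\geq N}\{(x,y):d(f^{n}x,f^{n}y)<1/k\}$, a $G_{\delta}$ set of pairs with $\liminf_{n\to\infty}d(f^{n}x,f^{n}y)=0$. The crux is to show that $L$ is dense; by a routine nested-sets construction this reduces to the statement that for all non-empty open $A,B\subseteq X$ and all $\varepsilon>0$ there are infinitely many $n$ with $\mathrm{dist}(f^{n}(A),f^{n}(B))<\varepsilon$. This is exactly the step where multi-transitivity (rather than mere weak mixing, which it need not enjoy) is indispensable. I would deduce it from the characterization $N(A,C)\in\mathcal{F}[\infty]$ for non-empty open $A,C$ (Theorem~\ref{thm:multi-trans-equ-condi-infty}): covering $X$ by finitely many $\varepsilon/4$-balls $C_{1},\dots,C_{m}$ gives $\bigcup_{i,j}\bigl(N(A,C_{i})\cap N(B,C_{j})\bigr)=\mathbb{N}$, so some pair $(C_{i_{0}},C_{j_{0}})$ is met along an infinite set of times, after which one must exploit the arithmetic structure of $\mathcal{F}[\infty]$-membership (which forces, for each $r$, an entire progression $\{k,2k,\dots,rk\}$ up to an arbitrary shift) to arrange that $C_{i_{0}}$ and $C_{j_{0}}$ overlap; alternatively, since $(X,f)$ is strongly scattering (Theorem~\ref{thm:diff-set-tran-impl-s-scatter}, as multi-transitivity is precisely multi-transitivity with respect to the difference set $\mathbb{N}$), one may use a transitive point of the product of $(X,f)$ with a well-chosen $E$-system to schedule the required approaches of $f^{n}(A)$ to $f^{n}(B)$, in the spirit of the proof that scattering implies Li-Yorke chaos. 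Making this synchronization argument work is the main obstacle, precisely because multi-transitivity only guarantees a priori that the orbits of two points hit a prescribed target at the unequal times $k$ and $2k$, not at a common time.

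Finally, $X\times X$ is a Baire space, so $R:=D\cap L$ is a dense $G_{\delta}$ subset of $X\times X$, and every $(x,y)\in R$ satisfies $\limsup_{n}d(f^{n}x,f^{n}y)\geq 2\delta>\delta$ and $\liminf_{n}d(f^{n}x,f^{n}y)=0$, hence is $\delta$-scrambled. By Theorem~\ref{thm:Mycielski-thm} there is a dense Mycielski set $K\subseteq X$ such that $(x,y)\in R$ for all distinct $x,y\in K$; this $K$ is the desired dense Mycielski $\delta$-scrambled set.
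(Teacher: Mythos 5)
Your architecture coincides with the paper's: a dense $G_\delta$ set of pairs with large $\limsup$ coming from sensitivity, intersected with a dense $G_\delta$ set of proximal pairs, then Mycielski's theorem. Your treatment of the first half is correct and in fact more self-contained than the paper's, which simply quotes the Asymptotic Theorem of Huang--Ye (or Akin--Kolyada) for the existence of $\delta$ such that $\{(x,y):\limsup_n d(f^nx,f^ny)>\delta\}$ contains a dense $G_\delta$; your preliminary observations that $X$ must be infinite and that a sensitive system has no isolated points (so Mycielski applies) are both right and worth keeping.

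The gap is exactly where you flag it: you never prove that $L$ is dense, and neither of your two suggested routes is carried through. The first (covering $X$ by $\varepsilon/4$-balls and using $N(A,C_i)\in\mathcal{F}[\infty]$) fails for the reason you yourself give: $\mathcal{F}[\infty]$-membership only yields hitting times of the form $k,2k,\dotsc,rk$ up to prescribed shifts, and nothing forces the cells $C_{i_0}$ and $C_{j_0}$ visited at a \emph{common} time to be close to one another. The second route is the correct one, but in the paper it is not an argument to be reconstructed ``in the spirit of'' scattering implies Li--Yorke chaos; it is a direct citation: Theorem~\ref{thm:diff-set-tran-impl-s-scatter} gives that a multi-transitive system is strongly scattering (since $\mathbb{N}$ is a difference set), and Theorem~3 of \cite{W-Huang-X-Ye-2002} states that for a strongly scattering (indeed $2$-scattering) system the proximal relation $PR(X,f)=\{(x,y):\liminf_{n\to\infty}d(f^nx,f^ny)=0\}$ is a dense $G_\delta$ subset of $X^2$ --- which is precisely your set $L$. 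Until you either invoke that result explicitly or supply the synchronization argument you acknowledge as ``the main obstacle,'' the proof is incomplete; with that citation in place, the rest of your write-up assembles into a correct proof essentially identical to the paper's.
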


\begin{proof}
By Theorem~\ref{thm:diff-set-tran-impl-s-scatter}, the system $(X,f)$ is strongly scattering.
Then by~\cite[Theorem~3]{W-Huang-X-Ye-2002}, the proximal relation
\[PR(X,f)=\{(x,y)\in X^2:\ \liminf_{n\to\infty} d(f^n(x),f^n(y))=0\}\]
is a dense $G_\delta$ subset of $X^2$.
By Lemma~\ref{lem:(1,2)-tran-impl-sensitive}, the system $(X,f)$ has sensitive dependence on initial conditions.
Then by~\cite[Asymptotic Theorem]{W-Huang-X-Ye-2002} or~\cite[Theorem 3.4]{AK03},
there exists a $\delta>0$ such that
\[R=\{(x,y)\in X^2:\ \limsup_{n\to\infty} d(f^n(x),f^n(y))>\delta\}\]
contains a dense $G_\delta$ subset of $X^2$.
Now by Theorem~\ref{thm:Mycielski-thm}, there is a dense Mycielski subset $C$ of $X$ such that
for every two distinct points $x,y$ in $K$, we have $(x,y)\in RP(X,f)\cap R$.
Clearly, $C$ is $\delta$-scrambled. This ends the proof.
\end{proof}

\subsection*{Acknowledgements}
The authors would like to thank the anonymous reviewers for
their valuable comments and suggestions to improve the
quality of the paper.

\end{document}